\documentclass[oneside, 12pt]{amsart}

\usepackage{amsmath, amssymb, amsthm, hyperref, graphicx,   verbatim, enumitem, xcolor, tikz-cd, stmaryrd, epsfig, xypic}
\usepackage{array, colortbl}
\usepackage{mathrsfs}  
\usepackage{mathabx}
\usepackage{times}
 
 \DeclareFontFamily{U}{mathc}{}
\DeclareFontShape{U}{mathc}{m}{it}%
{<->s*[1.03] mathc10}{}

\DeclareMathAlphabet{\mathscr}{U}{mathc}{m}{it}

\usepackage[paper=a4paper, marginpar=2.4cm]{geometry}
\usepackage[all]{xy}
\usepackage[utf8]{inputenc}

\usepackage[english]{babel}
 
\newcommand{\e}{\varepsilon}

\newcommand{\fr}{\partial}

\newcommand{\set}[1]{\left\{#1\right\}}
\newcommand{\norm}[1]{{\left\Vert#1\right\Vert}}
\newcommand{\abs}[1]{\left\vert#1\right\vert}

\newcommand{\rest}[1]{ \arrowvert_{#1}}

\newcommand{\el}{\mathcal{L}}

\newcommand{\lrpar}[1]{\left(#1\right)}
\newcommand{\bra}[1]{\left\langle #1\right\rangle}

\newcommand{\loc}{\mathrm{loc}}
\newcommand{\inv}{^{-1}}
\renewcommand{\d}{\mathrm{d}}

\DeclareMathOperator{\supp}{Supp}

\DeclareMathOperator{\jac}{Jac}

\DeclareMathOperator{\Stab}{Stab}

\newcommand{\C}{\mathbf{C}}
\newcommand{\R}{\mathbf{R}}

\newcommand{\Z}{\mathbf{Z}}
\newcommand{\N}{\mathbf{N}}

\renewcommand{\P}{\mathbb{P}}

\newcommand{\Aut}{\mathsf{Aut}}

\newcommand{\GL}{{\sf{GL}}}

\theoremstyle{plain}

\newtheorem{thm}{Theorem}[section]
\newtheorem{cor}[thm]{Corollary}

\newtheorem{lem}[thm]{Lemma}

\newtheorem{que}[thm]{Question} 
 
\newtheorem{conj}[thm]{Conjecture} 

\newtheorem*{thm*}{Theorem}

\theoremstyle{definition}

\newtheorem{rem}[thm]{Remark}

\numberwithin{equation}{section}       % Number formulas within sections

\addtocounter{section}{0}             % Start with section 1
\numberwithin{equation}{section}       % Number formulas within sections
\title[same equilibrium  measure]{Polynomial automorphisms of $\C^2$ with the same equilibrium measure}

\date{\today}

 \author{Romain Dujardin}
\address{Romain Dujardin,  Sorbonne Universit\'e, CNRS, Laboratoire de Probabilit\'es, Statistique  et Mod\'elisation  (LPSM), F-75005 Paris, France}
\email{romain.dujardin@sorbonne-universite.fr}
\thanks{{\footnotesize{The research activities of the author are partially supported by the  French National Research Agency under the project DynATrois (ANR-24-CE40-1163). }}}

\begin{document}

\setlength{\parskip}{.2em}
\setlength{\baselineskip}{1.26em}  

%%%%%%%%%%%%%%%%%%%%%%%%%%%%%%%%%%%%%%%%%%%%%%%%%%%%% 

\begin{abstract}
We review the following question: when do two polynomial automorphisms of $\C^2$ have the same 
maximal entropy measure? 
While the answer has been known for some time when both automorphisms 
are compositions of Hénon maps, the general case is more delicate. 
In this (mostly expository) note we explain how recent results in random dynamics give 
 an almost complete solution to the question, and point out a number of  related open problems. 
\end{abstract}

 \maketitle
   
\section*{Introduction}

\subsection*{One-dimensional background} 
A  venerable  problem in holomorphic dynamics   is to classify pairs of holomorphic maps in a certain class sharing the same Julia set. This  goes back to the work of Fatou, Julia  and Ritt 
on the functional equation $f\circ g = g\circ f$ for  rational maps in one variable ~\cite{fatou1, fatou2, julia, ritt}. 
The interest in this question has   
 been renewed by considerations from  arithmetic dynamics, 
and there is now a considerable  body of literature on this problem  and its variants (see \cite{zhang} for
a recent review and  further references).
Let us briefly discuss  a few milestones. If $f$ is a rational map, we denote by $J_f$ its Julia set and 
$\mu_f$ its equilibrium measure, that is its unique probability measure of maximal entropy. 
% let $f$ and $g$ be rational maps on $\P^1(\C)$. Let us 
% say that $f$ and $g$ are algebraically related if $(f,g)$ admits an invariant curve in $\P^1\times \P^1$. 
Let $f$ and $g$ be non-integrable rational maps.  
\begin{enumerate}
\item The work of Fatou~\cite{fatou1} and Julia~\cite{julia} implies
that if $J_f=J_g$ is not a curve or the whole sphere  and $f$ and $g$ share a repelling periodic point, 
then $f$ and $g$ have a common iterate.
\item Levin and Przytycki~\cite{levin-przytycki} used an ergodic theoretic argument together with the work of Levin on local symmetries of Julia sets~\cite{levin} to show that if $\mu_f  = \mu_g$ then $f$ and $g$ are algebraically related. 
\item It follows that  if $f$ and $g$ are polynomials with $J_f = J_g$ then $f$ and $g$ satisfy a relation of the form $f^N= L\circ g^M$, where $L$ is a linear map of finite order, a theorem originally due to Beardon \cite{beardon}.
\end{enumerate}
 An important fact is that for polynomials, the  Julia set
determines the equilibrium measure, hence the equality of Julia sets  implies  the equality of the Green functions, in which case tools such as the Böttcher function can be used to conclude. By contrast, in the rational case this approach is not applicable, and more sophisticated dynamical arguments are required. 
Note that in full generality, the problem of determining pairs of rational maps with the same 
non-smooth Julia set is still open.
%Variants of this problem have been considered, such as the possibility of local biholomorphisms between parts of the Julia set~\cite{dfg, ji-xie:symmetries}. 
 
 \subsection*{Two-dimensional results} 
 In this note we address a similar question  for  polynomial automorphisms of $\C^2$.
Recall from Friedland-Milnor~\cite{friedland-milnor} that any polynomial automorphism of $\C^2$ with  positive topological entropy  is conjugate to a composition of Hénon maps. 
A first important   observation is that when $f$ and $g$ are both composition of Hénon maps (or more generally regular automorphisms), or more generally if they are simultaneously conjugate to maps in such a form, then the $J_f=J_g$ problem is completely solved (see \S~\ref{subs:loxo} below for the notion of Julia set in this context, and other basic facts) . 

\begin{thm*}[Lamy~\cite{lamy:these, lamy}, see also Dujardin-Favre~\cite{dujardin-favre:DMM}]
Let $f$ and $g$ be compositions of Hénon maps in $\C^2$, and assume that  $J_f=J_g$ (resp. $K_f = K_g$). Then  there exists $M, N\in \Z^*$ such that $f^N=g^M$. 
\end{thm*}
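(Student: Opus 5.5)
The plan is to pass from equality of $K$ (or $J$) to equality of Green functions, and then to use the geometry at infinity to build a rigid algebraic relation between $f$ and $g$.

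\emph{Step 1: from sets to Green functions.} Recall $K^\pm_f$, the sets of points with bounded forward/backward orbits, and $K_f = K^+_f\cap K^-_f$, $J_f=J^+_f\cap J^-_f$ with $J^\pm = \partial K^\pm$. The Green functions $G^\pm_f$ are the pluricomplex Green functions of $K^\pm_f$, and $G_f=\max(G_f^+,G_f^-)$ is the pluricomplex Green (Siciak) function of the compact set $K_f$. If $K_f=K_g$, the Siciak extremal functions agree, so $\max(G^+_f,G^-_f)=\max(G^+_g,G^-_g)$ and the equilibrium measures $\mu_f=(dd^c G_f)^2=\mu_g$ coincide. For $J_f=J_g$ I would use that $J_f=\supp\mu_f$ and that $\mu_f$ is the equilibrium measure of $J_f$ (it has the same Siciak function as $K_f$, since $K_f$ is the polynomial hull of $J_f$), which reduces to the same statement.

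\emph{Step 2: separating $G^+$ from $G^-$.} For a composition of Hénon maps, $G^+_f$ is pluriharmonic off $K^+_f$, and near infinity $\{G^+_f>G^-_f\}$ is a neighbourhood of the point $[1:0:0]$ minus the line at infinity, the indeterminacy point of $f^{-1}$; symmetrically for $G^-$ near $[0:1:0]$. Since $f$ and $g$ are both in Hénon form, they share the same points $I^\pm$. Hence in a suitable region $V^+$ near $[1:0:0]$ we get $G^+_f=G^+_g$, and by pluriharmonicity of both on $\C^2\setminus K^+$ together with the identity principle (the regions $\{G^+>G^-\}$ are connected and $K^+_f$ is determined as the zero set), we conclude $G^+_f=G^+_g$ on $\C^2$ and likewise $G^-_f=G^-_g$. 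In particular $K^\pm_f=K^\pm_g$.

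\emph{Step 3: Böttcher coordinates and rigidity.} Near $I^-=[1:0:0]$, in $V^+=\{|x|>\max(|y|,R)\}$, there is a Böttcher-type function $\varphi^+_f$ with $\log|\varphi^+_f|=G^+_f$ and $\varphi^+_f\circ f=(\varphi^+_f)^{d_f}$ (up to a constant), and $\varphi^+_f\sim x$ at infinity. Since $G^+_f=G^+_g$, the ratio $\varphi^+_f/\varphi^+_g$ is holomorphic and of modulus one, hence a unimodular constant, which the normalization forces to be $1$. So $f$ and $g$ preserve the common foliation by level sets of $\varphi^+$ (the ``dynamical'' structure at infinity), and act on $\varphi^+$ as $z\mapsto z^{d_f}$, $z\mapsto z^{d_g}$. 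Set $h=f^N\circ g^{-M}$ with $d_f^N=d_g^M$ (this needs $\log d_f/\log d_g\in\mathbf{Q}$, which I would get from comparing Lyapunov exponents or entropies: both preserve $\mu$ with maximal entropy $\log d$, and an automorphism preserving $G^\pm$ satisfies $G^+\circ f = d_f G^+$, so $f$ and $g$ generate a group acting on $\R_{>0}$ by multiplication that must be discrete; otherwise some element of the group would contract $G^+$ arbitrarily little, contradicting degree growth). Then $h$ preserves $G^+$ and $G^-$, so $h$ has zero entropy and preserves $K$, and its dynamical degree is $1$. By Friedland--Milnor, $h$ is then conjugate to an elementary or affine map. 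An automorphism in the amalgamated product that preserves the nonelementary compact set $K$ must be of finite order, giving $f^{Nk}=g^{Mk}$.

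The main obstacle is this last step, showing that $h$ is of finite order rather than merely of bounded degree. I would first try to show that $h$ preserves $\varphi^+$ exactly, so that $h$ fixes the leaves at infinity. Together with the algebraic structure (Jung's theorem), this should force $h$ to be affine with finite-order linear part, or even the identity.
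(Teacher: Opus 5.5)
Your Steps 1 and 2 follow the same reduction the paper uses. Equality of $K$ (or of $J$, whose polynomial hull is $K$) gives equality of $\max(G^+,G^-)$, i.e.\ of the pluricomplex Green functions and harmonic measures. Since maps in Hénon form share their points at infinity, this yields $G^+_f=G^+_g$ near the common point at infinity where $G^+$ dominates. The propagation to all of $\C^2$ should be argued differently from your parenthetical, however, since connectedness of $\set{G^+>G^-}$ is not what matters. Let $\Omega_0$ be the component of $(\C^2\setminus K^+_f)\cap(\C^2\setminus K^+_g)$ on which $G^+_f=G^+_g$. At a boundary point of $\Omega_0$ one of the two functions vanishes, hence both do, so $\fr\Omega_0\subset K^+_f\cap K^+_g$. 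Thus $\Omega_0$ is open and closed in the connected set $\C^2\setminus K^+_f=\bigcup_n f^{-n}(V^+)$, and likewise for $g$. Hence $K^+_f=K^+_g$ and $G^+_f=G^+_g$ on $\C^2$.

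The genuine gap is Step 3, which is Lamy's theorem ``$G^+_f=G^+_g\Rightarrow f^N=g^M$''. The paper does not reprove it but imports it. Lamy's argument is group-theoretic: equal Green functions (or currents) force $\mathrm{Geo}(f)=\mathrm{Geo}(g)$ in the Bass--Serre tree, and a subgroup of $\Aut(\C^2)$ preserving a common axis is virtually cyclic. Your sketch does not replace this.

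(i) The rationality of $\log d_f/\log d_g$ is only asserted; ``contradicting degree growth'' needs an actual lower bound on the factor $\lambda(h)$ in $G^+\circ h=\lambda(h)G^+$, $h\in\bra{f,g}$. This part is repairable. If $\lambda(h)\geq 1$, restrict to a line $\set{y=c}$ along which $G^+=\log\abs{x}+O(1)$. Using $\max(G^+,G^-)=\log^+\norm{\cdot}+O(1)$ you get $\log^+\norm{h(x,c)}=\lambda(h)\log\abs{x}+O(1)$, while the left side is $k\log\abs{x}+O(1)$ with $k\in\Z$. So $\lambda(\bra{f,g})\subset\set{n^{\pm1}: n\geq 1}$, which forces it to be cyclic.

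(ii) More seriously, your growth argument only shows that $h=f^Ng^{-M}$ lies in the group $H$ of automorphisms preserving $G^+$ and $G^-$, and that $H$ is a compact group of \emph{affine} maps. Nothing excludes an infinite-order element such as an irrational rotation. ``An automorphism preserving a non-elementary compact set has finite order'' is not a valid principle; for this $K$ it is essentially the content of the theorem.

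(iii) Even if $(f^Ng^{-M})^k=\mathrm{id}$, this does not give $f^{Nk}=g^{Mk}$, because $f^N$ and $g^M$ need not commute.

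What you need is that $H$ is \emph{finite}. Then all $f^{Nn}g^{-Mn}$ lie in $H$, and pigeonhole gives $f^{N(n'-n)}=g^{M(n'-n)}$; equivalently, $\bra{f,g}$ is virtually cyclic. Proving finiteness of $H$ needs an additional input that uses $f$. One option is Lamy's tree argument. Another is to note that $H$ is normalized by $f$. If $H$ were infinite, some power of $f$, possibly composed with an element of $H$, would be a loxodromic automorphism commuting with a circle action. That is impossible, since such a map would have bounded degree or preserve a fibration. Your proposal, which ends with ``this should force'', leaves exactly this step open.
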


Indeed Lamy showed in his thesis\footnote{There is also a general statement about  $\mu_f=\mu_g$  in~\cite{lamy:these}. Note however that the definition of $\mu_f$ is different there so that it does not imply  the results of this article}
that if $f, g \in \Aut(\C^2)$ are such that 
 $G^+_f = G^+_g$ then  there exists $M, N\in \N^*$ such that $f^N=g^M$. 
If furthermore $f$ and $g$ are both compositions of Hénon maps (or more generally regular automorphisms),  then  $J_f=J_g$ (or $K_f = K_g$) implies that $\mu_f  = \mu_g$. Indeed, $\mu_f$  is the pluricomplex harmonic measure of the Julia set, which depends on the choice of a ``polynomial model of $\C^2$'', that is, of a divisor at infinity in a compactification of $\C^2$. In such a setting, it is not hard to deduce that  $\mu_f  = \mu_g$ implies  $G^+_f = G^+_g$, hence  $f^N=g^M$ (see the proof of Theorem~C in~\cite{dujardin-favre:DMM}).  
A  notable  result from~\cite{dujardin-favre:DMM} is that if for every $h\in \bra{f,g}$ we have 
 $K_h = K_f$ (or $J_h = J_f$), then it can be concluded that $f^N=g^M$.

Therefore,  the focus in this note is on the situation where   $f$ and $g$ are \emph{not} 
simultaneously conjugate to regular automorphisms. In this case  both 
 implications $J_f=J_g \Rightarrow \mu_f  = \mu_g$ and $\mu_f  = \mu_g \Rightarrow T^+_f = T^+_g$ are unclear (see~\S~\ref{subs:loxo} for a   detailed discussion). In this respect, the situation is comparable to that of rational maps in one variable (as opposed to polynomials), and
  it should be expected that more sophisticated ergodic tools are required to solve the problem. 

In this paper we explain how the ergodic theory of random compositions of holomorphic
diffeomorphisms developed by Cantat and the  author, as well as Roda and Nerrière~\cite{stiffness, roda, nerriere} 
solves the $\mu_f  = \mu_g$ problem in the vast majority of cases (see Theorems~\ref{thm:same_eq_measure_dissipative} and~\ref{thm:same_eq_measure_conservative}). 
The mathematical work was actually already done in these references, and this contribution 
should mostly be viewed as an opportunity to showcase these techniques, gather otherwise scattered arguments, and advertise   some open problems. Interestingly, the core technical result underlying these measure rigidity results in random dynamics (based on the work of Brown and Rodriguez-Hertz~\cite{brh})
is   a kind of distant cousin of Levin's paper on symmetries of the Julia set (in which an invariance under a flow is obtained from an accumulation of discrete symmetries).

On the other hand, in this generality the  $J_f =J_g$ problem is widely open, 
even if some cases can be settled, notably in the hyperbolic case. We plan to come back to this issue in a forthcoming  work.
Observe however that $J_f =J_g$
still implies $\mu_f \asymp \mu_g$ (as follows from Levenberg~\cite{levenberg}, see also  Theorem~\ref{thm:GZ}) 
so this problem still might  be approachable  by 
ergodic theoretic methods (see Remark~\ref{rem:Jf=Jg}).

Rigidity questions for pairs of polynomial automorphisms of $\C^2$ 
have attracted considerable attention in recent years. 
The rigidity of conjugacies was explored by Cantat and the author in~\cite{conjugate} and~\cite{rigidity} and multiplier rigidity in~\cite{multiplier};  
Bera, Pal and Verma have 
studied automorphisms with biholomorphic   escaping sets in 
in~\cite{bera-pal-verma,pal,bera-verma}; and Vigny recently solved the $J_f^+  = J_g^+$ problem for regular automorphisms in higher dimension~\cite{vigny}. 
In a slightly different setting,  Zhang~\cite{zhang} obtained
 measure rigidity results for regular polynomial maps of $\C^2$. 

\subsection*{Contents} After some preliminaries on loxodromic automorphisms, we develop the  
 main rigidity  results in Sections~\ref{sec:non-conservative} (non-conservative case) and~\ref{sec:conservative} (conservative case). Further results on the classification of stationary measures are presented in Section~\ref{sec:further}. 
 In the Appendix, we discuss the action of polynomial automorphisms on the pluricomplex harmonic measure. 

\subsection*{Acknowledgments} 
I am grateful to Serge Cantat for our long-term collaboration on random dynamics 
and many discussions on the topics of this paper. I extend my  
thanks to  Sébastien Boucksom, Vincent Guedj and Ahmed Zeriahi for their input in pluripotential theory, and to Yugang Zhang for some insightful comments. 
The main body of the paper was not touched  by AI at all. 
On the other hand the calculations of Lemma~\ref{lem:density_at_0} in the Appendix
were carried out by (the free version of) ChatGPT.  

\section{Preliminaries}
\subsection{Loxodromic automorphisms}\label{subs:loxo}
A polynomial automorphism of $f$ of $\C^2$ 
 is said loxodromic if its dynamical degree $d  := \lim_{n\to \infty}\deg(f^n)^{1/n}$ is greater than 1, or equivalently its topological degree is positive. 
We briefly recall a few standard concepts and facts, mostly to
 fix notation (see~\cite{bs1, bs2, bls, bs6} for more details and  references). Assume first that 
 $f$ is a composition of Hénon maps, or more generally that it is regular in the sense of Sibony, that is 
 $I(f)\cap I(f\inv) = \emptyset$, where $I(f)$ is the indeterminacy locus of the extension of $f$ to a birational map on $\P^2(\C)$. Then: 
\begin{itemize}
\item $K_f^+$ is the set of points with bounded forward orbits, and $J_f^+ = \fr K_f^+$ is the forward Julia set, and similarly for $K_f^-$, $J_f^-$;
\item $J_f = J_f^+\cap J_f^-$ is the Julia set;
\item $G_f^{+/-}(z)  = \lim_{n\to \infty} d^{-n} \log^+\norm{f^{+/- n}(z)}$ are the dynamical Green functions and satisfy  $G_f^\pm\circ f^{\pm1}  = dG_f^\pm$;
\item $T_f^\pm =  \d\d^c G_f^\pm$ are the dynamical Green currents;
\item $\supp T_f^\pm = J_f^\pm$ and $T_f^\pm$ is the only closed positive current supported on $K_f^\pm$; in particular if $p$ is any saddle point, $W^{s}(p)$ is an entire curve whose    only 
associated Ahlfors current   is $T_f^{+}$ and likewise for $T_f^-$, $J_f^-$ and  $W^u(p)$; 
\item $\mu_f = T_f^+\wedge T_f^-$ is the unique measure of maximal entropy; its support is 
contained in $J_f$ and denoted $J^*_f$;  it is ergodic, hyperbolic 
and its  Lyapunov exponents satisfy $\chi^-(\mu_f)\leq -\log d < \log d \leq \chi^-(\mu_f)$;  
\item if $\abs{\jac(f)}\leq 1$, $J_f$ is connected iff $J^*_f$ is connected iff $\chi^+(\mu_f) = \log d$;
\item $G_f = \max(G_f^+, G_f^-)$ is the   extremal (or Green) function of 
$K_f$ (or $J_f$) from pluripotential theory 
and $\mu_f = (\d \d^cG_f)^2$ is  the pluricomplex harmonic measure of  $K_f$.
\end{itemize}
An important point in this paper is that  by design 
we do not work with the Friedland-Milnor normalization. 
This induces a few changes with respect to the usual  ideas. 
%By~\cite{bs1}, if $f$ is a composition of Hénon maps (or more generally a regular automorphism) then the maximal entropy measure $\mu_f$ is the pluricomplex harmonic measure of $J_f$ or $K_f$, and $G_f = \max(G_f^+, G_f^-)$ is the associated   extremal (or Green) function.  
Take a regular automorphism $h$ and conjugate it 
in $\mathrm{Aut}(\C^2)$ to get $f= g  h  g\inv$. Assume that $g$ is not linear. 
We still define $\mu_f$ to be unique (probability) measure of maximal entropy for $f$, but now its relationship with the invariant currents is slightly different. 
Define $T_f^+$ to be the unique positive closed current of mass 1 in $\P^2$ such that $f^*T_f^+ =d T_f^+$. Then $T_f^+$ is the trivial extension to $\P^2$ of  $c (g_*T_h^+)\rest{\C^2}$ for some $c>0$. 
We also set 
$G^\pm_f = c G^+_h\circ g\inv$, and  the same for $T_f^-$ and $G_f^-$. The wedge product  
$T_f^+ \wedge T_f^-$ is a probability measure that may put some mass at infinity, so 
 $\mu_f  = c' T_f^+ \wedge T_f^- \rest{\C^2}$ for some $c'>0$. 
Also, $\tilde G_f:= \sqrt{c'} \max (G_f^+, G_f^-)$ is a psh function such that 
$(\d\d^c \tilde G_f) ^2 = \mu_f = g_*\mu_h$ but it does not grow like $\log\norm{z}$ at infinity
so it
is \emph{not} the pluricomplex Green function
of $J_f$, neither a ``normalized'' potential of $\mu_f$. 
This indicates that we cannot easily recover $G^\pm_f$ from 
$\mu_f$. This 
also suggests that $\mu_f$ may not be the harmonic mesure of $J_f$, 
so that $J_f$ does not directly determines $\mu_f$. 
See the Appendix for explicit (non-dynamical) examples where this phenomenon happens. 

The structure of the group $\Aut(\C^2)$ of polynomial automorphisms of $\C^2$ is well-understood
(see Lamy~\cite{lamy} and also~\cite{lamy:book} for a comprehensive treatment). Its amalgamated 
product structure gives rise to a faithful action of $\Aut(\C^2)$ by isometries 
on a simplicial tree (Bass-Serre), and the action of a loxodromic element preserves 
geodesic $\mathrm{Geo}(f)$ (its axis) on which it acts as a non-trivial translation. A subgroup of $\Aut(\C^2)$ is said \emph{non-elementary} if it contains two loxodromic elements $f,g$ 
 ``in general position'' that is, whose axes have disjoint endpoints.  
 %In particular, $\bra{f^N, g^N}$ is isomorphic to a free group on two generators for some $N$. 

\subsection{Random dynamics and $\GL_2(\C)$-cocycles}\label{subs:random_GL2}
 (See~\cite[\S 7]{stiffness} for a more detailed presentation in the similar context  of automorphisms of complex surfaces, and~\cite{dujardin:book} for generalities on the ergodic theory of 
 random dynamical systems.)
If $\nu$ is a probability measure on $\Aut(\C^2)$, we can define a  
random dynamical system on $\C^2$ by 
compositing independents   elements with distribution $\nu$. 
 A probability measure $\mu$ on $\C^2$ is $\nu$-invariant 
if $h_*\mu = \mu$ for $\nu$-a.e. $h$   and it is
$\nu$-stationary if $\int h_*\mu \d \nu(h)  =\mu$. 
In this paper we will only consider the case where $\nu$ is finitely supported and $\mu$ has compact support, and all measures considered are probability measures.  The interesting situation is when 
$\bra{\supp(\nu)}$ is non-elementary
A $\nu$-stationary measure $\mu$  is said ergodic if every measurable $\nu$-a.s.-invariant subset 
is null or co-null; note that in our finitely generated context,  
$\nu$-a.s.-invariant simply means $\bra{\supp(\nu)}$-invariant. 
Every invariant measure is stationary, and the stiffness problem is whether the converse holds. This problem now admits   
 an fairly  complete solution, as we explain below.
 %a combination of the ideas of~\cite{brh, stiffness, roda, nerriere}. 

Such a random dynamical system, endowed with a stationary measure $\mu$, gives rise to a skew-product and a  $\GL_2(\C)$-cocycle as follows. 
Let $\Sigma_ +  =\Aut(\C^2)^\N$, endowed with the one-sided shift $\sigma$ and the shift invariant measure $\nu^\N$. Then,    we define a   skew-product map  $F_+$
 on $\Sigma_+\times \C^2$   by $F_+(\omega, x)  =(\sigma(\omega), f_{\omega_0}(x))$. It admits 
  $\nu^\N\times \mu$ as an invariant measure, which is ergodic if and only if $\mu$ is ergodic as a stationary measure. We can extend $F_+$ as an invertible skew-product $F$
   over the 2-sided shift $(\Sigma, \sigma, \nu^\Z)$, 
   and there is a unique $F$-invariant measure $\mathscr{m}$ projecting down to 
$\nu^\N\times \mu$. A fundamental result is that $\mathscr{m} = \nu^\Z\times \mu$ if and only 
if $\mu$ is $\nu$-invariant. 
 
 The tangent action of the random dynamics yields  a $\mathrm{GL}_2(\C)$-cocycle over 
 $F_+$ (resp. $F$)  defined by 
 $(\omega, x)\mapsto Df_{\omega_0}(x)$. 
 The Lyapunov exponents $\lambda_1\geq \lambda_2$ 
 of $\mu$ viewed as a $\nu$-stationary measure are by definition the Lyapunov exponents 
 $\lambda_1\leq \lambda_2$ of this cocycle. 
 An important observation is that $\lambda_1 + \lambda_2$, hence in particular its sign, 
 can  easily be computed (see~\cite{simons})
\begin{equation}\label{eq:lyapunov}
\lambda_1+\lambda_2 = \int \log\abs{\jac(h)}d\nu(h). 
\end{equation}
 
Coming back to the stiffness problem for $\Aut(\C^2)$,   
 the Invariance Principle of Crauel~\cite{crauel} and Avila-Viana~\cite{avila-viana} (see~\cite{ledrappier:survey} for a recent account on this topic) 
 asserts that $\lambda_1\geq 0$ implies  invariance. 
Thus, when $\lambda_1+\lambda_2\geq 0$, 
 we only have to deal with the case where 
$\lambda_1<0<\lambda_2$, that is, when $\mu$ is hyperbolic as a stationary measure. 
This is the situation considered in~\cite{brh, stiffness, roda} and in particular \cite{nerriere}, and the current status of the problem is that if $\bra{\supp(\nu)}$ is purely loxodromic, then $\mu$ must be invariant and satisfy further rigidity properties. It is likely that the result holds under the sole assumption that $\bra{\supp(\nu)}$ is non-elementary. 

\begin{rem}\label{rmk:purelox} 
Lamy's classification~\cite[\S~4]{lamy} says that $\bra{f,g}$ is non-elementary 
if and only if the corresponding geodesics 
$\mathrm{Geo}(f)$ and $\mathrm{Geo}(g)$ in the Bass-Serre tree  do not coincide. Then there exists an integer $N$ such that $\bra{f^N, g^N}$ is purely loxodromic and 
isomorphic to a free group on two generators. Thus,   questions such as~$\mu_f = \mu_g$ or 
$J_f = J_g$   can 
be reduced from non-elementary to the purely loxodromic case. 
\end{rem}

\section{Non-conservative maps}\label{sec:non-conservative}

\begin{thm}\label{thm:same_eq_measure_dissipative}
Let $f$ and $g$ be  polynomial automorphisms of $\C^2$ of positive entropy 
such that $\mu_f = \mu_g$. Assume that one of $f$ or $g$ is not conservative. Then there exists $(M, N)\in (\Z^*)^2$ such that 
$f^N = g^M$. 
\end{thm}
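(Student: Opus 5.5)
We argue by contradiction, using the stiffness results for random dynamics recalled in \S\ref{subs:random_GL2}. Set $\mu:=\mu_f=\mu_g$. First we reduce to the case where $\bra{f,g}$ is non-elementary. If it is elementary, then by Lamy's classification (Remark~\ref{rmk:purelox}) we have $\mathrm{Geo}(f)=\mathrm{Geo}(g)$. We then expect $f$ and $g$ to be simultaneously conjugate to compositions of Hénon maps: conjugate so that the common axis passes through the standard vertices, and use that loxodromic elements with the same axis are products of elementary pieces along that axis. In that setting the Lamy/Dujardin--Favre theorem of the Introduction gives $f^N=g^M$, since there $\mu_f=\mu_g$ implies $G^+_f=G^+_g$. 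So assume $\bra{f,g}$ is non-elementary. Replacing $f,g$ by $f^N,g^N$ (which keeps $\mu$ and the non-conservativity hypothesis), Remark~\ref{rmk:purelox} lets us assume $\bra{f,g}$ is purely loxodromic and free.

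Next we choose the random walk to exploit dissipation. Say $\abs{\jac(f)}=a\neq 1$. Replacing $f$ by $f^{-1}$ if needed (this preserves $\mu_f$), assume $a<1$. Let $\nu$ be a finitely supported measure on $\{f^{\pm1},g^{\pm1}\}$, for instance $\nu=p\,\delta_f+(1-p)\,\delta_{g^{\varepsilon}}$, with weights chosen so that $\int\log\abs{\jac}\,d\nu\neq 0$. This is always possible: taking $p$ close to $1$ makes the integral close to $\log a<0$. Since $\mu$ is invariant under $f$ and $g$, it is $\nu$-invariant, so $\nu^\Z\times\mu$ is the invariant measure of the skew product $F$. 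By \eqref{eq:lyapunov}, $\lambda_1+\lambda_2<0$. Passing to an ergodic component is unnecessary: $\mu$ is ergodic for $f$ alone, so it is ergodic as a stationary measure.

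The core step combines hyperbolicity of $\mu$ for $f$ with the rigidity theory of~\cite{brh, stiffness, roda, nerriere} for hyperbolic invariant measures of purely loxodromic non-elementary groups. We want to show that such a measure with $\lambda_1+\lambda_2\neq 0$ must be supported on a finite set. If $\lambda_1<0<\lambda_2$, the measure rigidity theorem should give one of three outcomes: $\mu$ is finitely supported; $\mu$ is supported on an invariant algebraic curve; or $\mu$ is absolutely continuous with a volume-type density. The last alternative forces invariance of a volume form, hence $\abs{\jac}=1$ for every element of the support, which is excluded. The curve alternative is excluded because $\mu$ has positive entropy for $f$, and $\mu$ is not atomic for the same reason. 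If instead $\lambda_2\le 0$, the random walk is contracting. Then the fibre measures of $\mu$ should be Dirac masses, i.e. there is a random synchronization. This contradicts the fact that $\mu$ is non-atomic and $f$-invariant with positive entropy: conditional measures along stable directions for $f$ are non-atomic, while $\mu$ would have to equal the random limit point distribution. Each case gives a contradiction, so the non-elementary case cannot occur.

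The main obstacle is this last step: extracting from the results of~\cite{stiffness, nerriere} the precise classification of $\nu$-invariant hyperbolic measures and ruling out the non-hyperbolic contracting case cleanly. I would first invoke Nerrière's theorem in the purely loxodromic setting and check that its "non-atomic, no invariant curve" conclusion forces an invariant smooth volume, contradicting dissipation. The elementary-case reduction to Lamy's theorem is secondary but also needs care regarding normalization.
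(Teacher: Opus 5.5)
Your argument breaks down at the dichotomy for the fixed walk $\nu=p\,\delta_f+(1-p)\,\delta_{g^{\varepsilon}}$. Since $\lambda_1+\lambda_2<0$ forces $\lambda_1<0$, there are three alternatives: $\lambda_2>0$, $\lambda_2<0$ and $\lambda_2=0$. Only $\lambda_2<0$ is contracting, and there your atomicity argument is fine, because the lift is $\nu^\Z\times\mu$. The case $\lambda_1<0=\lambda_2$ is never treated. Nothing in your proposal produces weights for which $\mu$ is hyperbolic, and taking $p$ close to $1$ does not help, since nothing guarantees that the exponents of $\nu$ approach $\chi^\pm(f,\mu)$.

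This is not a technicality. If $\Gamma=\bra{f,g}$ preserves a single measurable line field, your choice does work: the exponent along it is an explicit integral close to $\chi^\pm(f,\mu)$. But suppose $\Gamma$ preserves the pair $\set{E^s_f,E^u_f}$ and $g$ exchanges the two lines on a set of positive but not full measure. Then one exponent may vanish for every choice of weights. This is exactly the exceptional situation of Theorem~\ref{thm:full_classification}, which the paper cannot exclude for zero-entropy measures. So positive entropy has to be used for more than non-atomicity. Invoking Nerri\`ere's classification does not close the gap either, since it presupposes hyperbolicity.

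The missing ingredient is the paper's case analysis on measurable $\Gamma$-invariant line fields.
\begin{itemize}
\item If there is an invariant line field or an invariant pair, then after possibly inverting $f$ or $g$ we have $E^s_f=E^s_g$ on a set of positive measure. Positive entropy makes the conditionals of $\mu$ on Pesin stable atoms non-atomic. The tangency lemma \cite[Lem. 6.4]{bls} then forces the stable manifolds to coincide, whence $J^+_f=J^+_g$, $T^+_f=T^+_g$ and $\mathrm{Geo}(f)=\mathrm{Geo}(g)$.
\item If there is none, Ledrappier's invariance principle gives $\lambda^1_\alpha<\lambda^2_\alpha$ on the whole simplex of weights on $f^{\pm1},g^{\pm1}$, and the exponents are then continuous. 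Non-zero exponents of the same sign force atomicity. The involution $(a,b,c,d)\mapsto(b,a,d,c)$ together with connectedness of the simplex yields an open set of hyperbolic parameters. Inside it, $\log\abs{\jac f}\neq0$ lets you make $\lambda^1+\lambda^2\neq0$.
\end{itemize}

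Two further corrections to your core step:
\begin{itemize}
\item The Roda--Nerri\`ere classification is stated for $\lambda_1+\lambda_2\geq 0$, so you must pass to the reversed walk. This is legitimate because $\mu$ is invariant.
\item Its non-atomic alternatives are the SRB case and the totally real case, not ``invariant curve or volume density''. Your invariant-volume argument does not dispose of the totally real case. What does is the Ledrappier--Young formula: in both cases the stable and unstable conditional dimensions agree, which forces $\lambda_1+\lambda_2=0$.
\end{itemize}

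Your reduction of the elementary case is acceptable, although Lamy's classification already gives virtual cyclicity, hence $f^N=g^M$, once the axes coincide.
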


This result is a consequence of the following   more general statement. 

\begin{thm}\label{thm:same_positive_entropy_measure}
Let $f$  be a non-conservative  polynomial automorphism of $\C^2$  and 
$\mu$ be  a measure of positive entropy for $f$. If $g$ is another automorphism such that 
$\mu$ is  $g$-invariant,  there exists $(M, N)\in (\Z^*)^2$ such that 
$f^N = g^M$. 
\end{thm}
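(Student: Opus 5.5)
The argument is by contradiction, using the random-dynamics rigidity results recalled in \S\ref{subs:random_GL2}, applied to a symmetric random walk on $\bra{f,g}$.

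\emph{Step 1: reduction to a non-elementary group.} Assume that no relation $f^N=g^M$ holds. We first check that $g$ is loxodromic. If $g$ were elementary (elliptic or de Jonquières type), invariance of the compactly supported, positive entropy, hyperbolic measure $\mu$ should be impossible, or else should force $g$ to commute with a power of $f$; centralizers of loxodromic elements are virtually cyclic (Lamy), so a power of $g$ is a power of $f$. For the same reason, if $\mathrm{Geo}(f)=\mathrm{Geo}(g)$ then $f$ and $g$ lie in the virtually cyclic stabilizer of the axis, which again gives $f^N=g^M$. So the axes differ, and by Remark~\ref{rmk:purelox} we may replace $f,g$ by $f^N,g^N$. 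The group $\Gamma=\bra{f,g}$ is then free and purely loxodromic, and $\mu$ is $\Gamma$-invariant.

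\emph{Step 2: choice of $\nu$ and hyperbolicity.} Since $f$ is non-conservative, $\abs{\jac f}\neq 1$; replacing $f$ by $f\inv$ if needed, take $\abs{\jac f}<1$. Put $\nu=\frac12(\delta_f+\delta_g)$, or, if $\abs{\jac g}>1$, weights $\nu=p\delta_f+(1-p)\delta_g$ with $p$ close to $1$, so that by~\eqref{eq:lyapunov} $\lambda_1+\lambda_2<0$. The Jacobians are constants, which makes this computation exact. The measure $\mu$ is $\nu$-invariant, hence stationary. Passing to an ergodic component is harmless: $f$ preserves the components, and we can keep one of positive $f$-entropy. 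We then need $\lambda_1<0<\lambda_2$. If $\lambda_2\le 0$, the random dynamics is contracting, and one expects $\mu$ to be atomic or supported on a contracting structure, contradicting positive entropy of $f$. To make this precise, use that the fibered entropy dominates a contribution from $f$ (Abramov–Rokhlin together with Ruelle's inequality for the cocycle) to get $\lambda_2>0$.

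\emph{Step 3: measure rigidity.} We are now in the hyperbolic, purely loxodromic, invariant setting of~\cite{brh, stiffness, roda, nerriere}. Their classification of invariant hyperbolic measures for non-elementary groups says that $\mu$ is either finitely supported or absolutely continuous with respect to volume with a smooth density (the "rigidity properties"). Positive entropy excludes finite support. Absolute continuity with a density, combined with a non-constant Jacobian ratio, is incompatible with invariance: $f_*\mu=\mu$ with $\mu=\rho\,\mathrm{Leb}$ forces $\rho\circ f\cdot\abs{\jac f}^2=\rho$, and integrating over the compact support gives $\abs{\jac f}=1$, a contradiction.

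\emph{Main obstacle.} The crux is Step 3, in the dissipative case $\lambda_1+\lambda_2<0$. The invariance principle is no longer available in this regime, so we must check that the hypotheses of Nerrière's theorem (purely loxodromic, hyperbolic, non-elementary) are met. We must also extract from the Brown–Rodriguez-Hertz dichotomy the precise alternative that is used: either the stable directions are non-random, in which case the invariant stable lamination gives algebraic constraints forcing a common iterate, or the measure is SRB-like, which is excluded by dissipativity as above. The non-random stable direction case is where I expect the most work: one must show it yields an invariant current or line field incompatible with two loxodromic elements in general position.
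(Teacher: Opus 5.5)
Your proposal has genuine gaps: Step 2 does not establish hyperbolicity, and Step 3 misstates the measure classification.

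\textbf{Step 2 (hyperbolicity).} There is no general principle by which the fibered entropy of $\nu$ ``dominates a contribution from $f$''. Already for $\nu=\frac12(\delta_f+\delta_{f\inv})$ the random composition is $f^{S_n}$ with $S_n$ a simple random walk. So both exponents vanish and the fiber entropy is $0$, although $h_\mu(f)>0$; Abramov--Rokhlin and Ruelle give nothing here. For your fixed $\nu$ with $\lambda_1+\lambda_2<0$, the atomicity argument excludes $\lambda_2<0$ but not $\lambda_2=0$.

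The paper obtains hyperbolicity differently, and only after a case split you never make.
\begin{enumerate}
\item If $\Gamma$ preserves a measurable line field or pair of line fields, these coincide a.e.\ with Oseledets directions of $f$ and $g$. So, after replacing $g$ by $g\inv$, the Pesin stable manifolds of $f$ and $g$ are tangent on a set of positive measure. Positive entropy makes the conditionals on a Pesin partition non-atomic, so a lemma of Bedford--Lyubich--Smillie forces these manifolds to coincide. Hence $J^+_f=J^+_g$, $T^+_f=T^+_g$, and $\mathrm{Geo}(f)=\mathrm{Geo}(g)$ by Lamy. This is exactly your ``non-random stable direction'' alternative, and it must be disposed of geometrically \emph{before} seeking hyperbolicity. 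Compare the residual case of Theorem~\ref{thm:full_classification}: when a pair of line fields is present, one exponent may vanish for all weights.
\item Otherwise, Ledrappier's invariance principle and the classification of its equality case give $\lambda^1_\alpha<\lambda^2_\alpha$ for every $\nu_\alpha=a\delta_f+b\delta_{f\inv}+c\delta_g+d\delta_{g\inv}$. Three facts then combine: the exponents are continuous in $\alpha$; exponents of equal sign would make $\mu$ atomic; and the time reversal $(a,b,c,d)\mapsto(b,a,d,c)$ exchanges the two sets below. Hence $\set{\lambda^2_\alpha>0}$ and $\set{\lambda^1_\alpha<0}$ are non-empty open sets covering the connected simplex, so they intersect.
\end{enumerate}

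\textbf{Step 3 (rigidity).} Roda's theorem needs $\lambda^1+\lambda^2\ge 0$. Since $\mu$ is already $\Gamma$-invariant you can reach this by passing to inverses, so your worry about the invariance principle in the dissipative regime is a red herring. When there is no invariant line field or pair, the theorem says $\mu$ is atomic, or its stable and unstable conditionals have an extra affine invariance in normalizing coordinates. The latter includes the totally real alternative: the conditionals live on real-analytic curves and $\mu$ is not absolutely continuous on $\C^2$, so your Jacobian argument does not reach it. Even in the absolutely continuous case, integrating $\rho\circ f\,\abs{\jac f}^2=\rho$ is a tautology by change of variables. You must instead compare the Lebesgue measure of the invariant set $\set{\rho>0}$ with that of its preimage.

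The paper uses non-conservativity as follows. Every rigid alternative forces equal Hausdorff dimensions of the stable and unstable conditionals, hence $\lambda^1+\lambda^2=0$ by Ledrappier--Young. On the other hand, $\log\abs{\jac f}\neq 0$, formula~\eqref{eq:sum_exponents}, and openness of the set of hyperbolic parameters let one pick $\alpha$ with $\lambda^1_\alpha+\lambda^2_\alpha>0$, a contradiction.

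\textbf{Step 1.} Your claim that $g$ must be loxodromic is false: $g=\mathrm{id}$, or a finite-order symmetry of $f$, preserves $\mu$. The statement must be read for $g$ of infinite order. The paper handles this by showing that every $h\in\Stab(\mu)$ fixes $\mathrm{Geo}(f)$, applying the argument to the loxodromic conjugate $hfh\inv$.
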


This theorem  is a combination  of~\cite[\S 11]{stiffness}, which is adapted in~\cite{simons} for     real  polynomial automorphisms,  together with the main classification theorem of Roda~\cite{roda}, as well as 
 an important input from Brown and Rodriguez-Hertz~\cite[Thm 5.1]{brh}. 
The case where $\mu$ is not ergodic is discussed in~\cite[Thm 8.8]{hyperbolic}. 
We give a reasonably complete account of the proof   to help the reader navigate these references.
Note that  in this positive entropy setting, the work of Nerrière~\cite{nerriere} is not required. 

\begin{rem}
It is interesting that there is no known 
analogue of Theorem~\ref{thm:same_positive_entropy_measure} for 
one-dimensional rational maps. To fully appreciate the difficulty of this kind of 
problem, it is worth noting that 
classifying measures simultaneously 
invariant under $z\mapsto z^2$ and $z\mapsto z^3$ is  the celebrated  
Furstenberg $\times 2\times 3$ problem (even if this example is probably  misleading since it concerns an abelian semi-group, and in addition its  positive entropy case is actually understood by~\cite{rudolph}). 
\end{rem}

\begin{proof}[Proof of Theorem~\ref{thm:same_positive_entropy_measure}]
We will show that the group $\Stab(\mu)$ of automorphisms of $\C^2$ preserving $\mu$ is elementary.  More precisely, since $\mu$ has positive entropy, $f$ must be loxodromic, 
and we will show that  
every element $h\in \Stab(\mu)$ preserves the invariant geodesic $\mathrm{Geo}(f)$ 
 of $f$ in the Bass-Serre tree of $\Aut(\C^2)$. Then it follows from 
 from the classification of subgroups of $\Aut(\C^2)$ (see in particular~\cite[Prop. 4.10]{lamy})
  that $\Stab(\mu)$ is virtually cyclic  and the result follows. 
 Let $g  = hfh\inv$ so that $g\in\Stab(\mu)$ is loxodromic with $\mathrm{Geo}(g) = h_*(\mathrm{Geo}(f))$
  and let us show that $\mathrm{Geo}(g)  = \mathrm{Geo}(f)$. 

Assume first that $\mu$ is ergodic under $f$. Fix a measure $\nu$ on $\bra{f,g}$ of the form $\nu = a\delta_f+b\delta_{f\inv}+c\delta_g + d \delta_{g\inv}$, where $a$, $b$, $c$, $d$ are positive real numbers summing to 1.
 Put $\alpha =  (a,b,c,d)$ and $\nu_\alpha$ the corresponding measure. 
Then $\mu$ is ergodic as a $\nu$-stationary measure: indeed any $\bra{f,g}$-invariant set must be $f$-invariant. The argument will be decomposed into several cases, according to the existence or non-existence of certain invariant complex line fields. Denote $\Gamma:=\bra{f,g}$.

\noindent \textit{Case 1. There is a measurable $\Gamma$-invariant field of complex lines $x\mapsto E(x)\subset T_x\C^2$.}

 Since $\mu$ has positive entropy for $f$, by the Ruelle inequality it is hyperbolic, and by ergodicity we conclude that $E(x)$ must coincide with one of the Oseledets directions: $E(x) = E^u_f(x)$ or $E(x) = E^s_f(x)$, $\mu$-a.e., and likewise for $g$. Thus, replacing $f$ by $f\inv$ (resp. $g$ by $g\inv$) if necessary, we get that $E(x)   = E^s_f(x) = E^s_g(x)$ a.e. In other words, the Pesin stable manifolds of $f$ and $g$ are tangent a.e. In this case,  we will show that these stable manifolds actually coincide. 
 Since by~\cite{bs2} every stable manifold is dense in $J^+$, we conclude that 
 $J^+_f = J^+_g$ which by uniqueness implies $T^+_f = T^+_g$~\cite{fornaess-sibony:1993}, 
hence by~\cite[Thm 5.4]{lamy}
 $\mathrm{Geo}(g)  = \mathrm{Geo}(f)$, as asserted. 
 
 In a nutshell, the argument runs as follows (see~\cite[Lem 11.3]{stiffness}) for technical  details): assume by contradiction that stable manifolds do not coincide on a set of positive measure, hence almost everywhere by ergodicity. Consider a 
measurable partition $\eta^s_f$ made of open subset of local stable manifolds of $f$ (a so-called Pesin partition). The positivity of entropy implies that the conditionals of $\mu$ along the atoms of this partition are non-atomic. So in a typical atom of $\eta^s_f$, which is an open set in a  local stable manifold $W^s_\loc(x)$, we have uncountably many ``typical'' points $y$ for $g$, accumulating $x$. 
These stable manifolds have size bounded from below, they are all tangent to $W^s_\loc(x)$ and they must be disjoint:  an elementary complex analysis lemma~\cite[Lem. 6.4]{bls} shows that this is impossible (see Figure~\ref{fig:tangent}).  

\begin{figure}[h]
\includegraphics[width=0.5\textwidth]{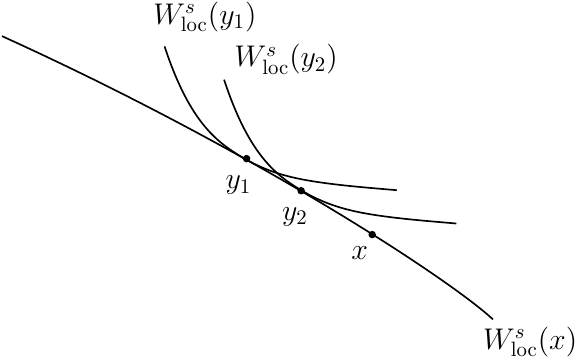}
\caption{Tangency is incompatible with disjointness.}\label{fig:tangent}
\end{figure}

\noindent \textit{Case 2. There is a measurable pair of  $\Gamma$-invariant line fields.} 
 
 Here we have a pair of line fields $\set{E_1(x), E_2(x)}$ which is $\Gamma$-invariant but a priori not individually invariant by $f$ and $g$. In any case by Oseledets' Theorem 
  at $\mu$-a.e. $x$ we have 
 $$\set{E_1(x), E_2(x)} = \set{E^s_f(x), E^u_f(x)} = \set{E^s_g(x), E^u_g(x)}. $$
 Replacing $g$ by $g\inv$ if necessary, we have $E^s_f(x) = E^s_g(x)$ on a set of positive measure, and we conclude exactly as in the previous case. 
 
\begin{rem}\label{rmk:conservative1}
Up to this point we have not used the fact that $f$ is non-conservative. 
\end{rem}

 \noindent \textit{Case 3.  None of cases 1 and 2 holds. }
 
 This is the most sophisticated case to deal with. Here we look for a contradiction. For this, 
  we want to find  a parameter
 $\alpha = (a,b,c,d)$ as above so that $\mu$ is hyperbolic as a $\nu_\alpha$-stationary measure. 
  This  closely follows the proof  of \cite[Thm 5.1]{brh}.
 Then we can apply the measure classification theorem of~\cite{roda} to prove that this case leads to a contradiction.

 \noindent \textit{Step 3.1. Choice of $(a,b,c,d)$.} 
 
Consider the simplex $\Delta = \set{\alpha = (a,b,c,d)\in (\R_{>0})^4, a+b+c+d =1}$. Since $\mu$ is ergodic as a $\nu_\alpha$-stationary measure, it admits two Lyapunov exponents 
$\lambda^1_\alpha\leq \lambda^2_\alpha$. Since $f$ and $g$ have constant Jacobian, as in the deterministic case we can easily compute the sum of the exponents (see~\cite[Prop. 10.2]{simons})
\begin{equation}\label{eq:sum_exponents}
\lambda^1_\alpha+ \lambda^2_\alpha = (a-b)\log\abs{\jac f} + (c-d) \log\abs{\jac g}. 
\end{equation}

We have a fairly good understanding  of the cases where $\lambda^1_\alpha= \lambda^2_\alpha$. 
For this, consider the projectivized tangent bundle $\P T\C^2$, endowed with the 
dynamical system induced by $\Gamma$ (resp. the random dynamical system induced by $\nu_\alpha$). 
Consider the $\nu_\alpha$-stationary 
probability  measures $\hat\mu$ on  $\P T\C^2$ 
whose projection on $\C^2$ coincides with $\mu$, 
 i.e.\ 
$\pi_\varstar \hat\mu= \mu $ where $\pi\colon \P T\C^2\to \C^2$ is the natural projection. . By compactness of $\supp(\mu)$ and of the fibers, such measures always exist. Any such measure can be disintegrated along the fibers of  $\pi$ and the conditional $\hat \mu(\cdot\vert \pi\inv(x)$ is denoted 
$\hat\mu_x$. 
Ledrappier's Invariance Principle 
(see~\cite{ledrappier:stationary} and~\cite{ledrappier:survey} for a recent overview) 
asserts that 
if $\lambda^{1}_\alpha = \lambda^{2}_\alpha$, 
then for any  stationary measure $\hat \mu$ on $\P TX$ projecting   to $\mu$, we have $\P(D_xh)_\varstar \hat\mu_x = \hat\mu_{h(x)}$
for $\mu$-a.e. $x$ and every $h\in\Gamma$.
 
 The above  invariance relation should be considered as an exceptional property, and the situations where this happens can be classified (see~\cite[Thm 7.3]{hyperbolic} for this statement, which is a slight generalization of~\cite{barrientos-malicet}): either there exist an invariant or a pair of invariant line fields, or the projective action of $\Gamma$ is reducible to a compact subgroup (i.e. it acts like a compact subgroup up to a change of coordinates). None of these cases can 
 happen in our case: the existence of line fields is excluded by definition, and the reduction to a compact subgroup is impossible because $\mu$ is hyperbolic for $f$. The conclusion is that 
 $\lambda^1_\alpha< \lambda^2_\alpha$ for every $\alpha\in \Delta$.

The last step is a continuity result for the Lyapunov exponents. Indeed the assumption of Case 3 and the inequality  $\lambda^1_\alpha< \lambda^2_\alpha$ imply that $\lambda^1_\alpha$ and $\lambda^2_\alpha$ are continuous (this follows from the techniques of Viana~\cite{viana:book, avila-viana}, see \cite[Prop. 13.7]{brh} for details\footnote{One difference between this reference and our setting   in that the dynamics considered in~\cite{brh} is volume preserving.  So the proof in~\cite{brh} has to be modified  so that the role of 0 in their argument is now played by the average exponent
$(\lambda^1_\alpha+ \lambda^2_\alpha)/2$. Algebraically, this corresponds to factorizing the determinant and work with the associated projective cocycle. }). 
Since $\mu$ is $\Gamma$-invariant, it follows from a classical argument  in Pesin's theory  that 
$\lambda^1_\alpha$ and $\lambda^2_\alpha$ cannot be both positive or both  negative, for this would force $\mu$ to be atomic. The idea is as follows: with notation as in \S~\ref{subs:random_GL2}, if 
 $\lambda^1_\alpha$ and $\lambda^2_\alpha$ are both  negative, say, 
 ergodicity and contraction (see e.g.~\cite[Cor. S.5.2]{Katok-Hasselblatt}) 
  imply that the conditional measures of $\mathscr{m}_\xi$ on the fibers 
 $\set{\xi}\times \C^2$ of the natural projection $\Sigma\times \C^2\to\C^2$ are  atomic $\nu^\Z$-a.e.. But since $\mu$ is invariant we have $\mathscr{m} = \nu^\Z\times \mu$ so $\mathscr{m}_\xi = \mu$ a.s.,  and we are done. 
Thus,  for every $\alpha\in \Delta$
we  must have $\lambda^2_\alpha>0$ or $\lambda^1_\alpha<0$. 
Together with the continuity of the exponents, this  shows that
$P:=\set{\alpha: \ \lambda^2_\alpha>0}$ and $N:=\set{\alpha: \ \lambda^1_\alpha<0}$ form an open cover of 
$\Delta$. But both these open sets must be non-empty: indeed  
 the involution $f\leftrightarrow f\inv$, 
$g\leftrightarrow g\inv$ (i.e. the involution $(a,b,c,d)\mapsto (b,a,d,c)$) changes $(\lambda^1, \lambda^2)$ into $(-\lambda^2, -\lambda^1)$ so it exchanges $P$ and $N$ (here it is important that $\mu$ is $\Gamma$-invariant, so the natural extension of the  product measure on $\Gamma^\N\times \C^2$ is $\nu^\Z\times \mu$, see~\cite[Lem. 13.2]{brh}). So by the connectedness of $\Delta$ we conclude that $P\cap N\neq\emptyset$, which provides the desired hyperbolic parameter.

\begin{rem}\label{rmk:conservative2}
Note that Step 3.1 works exactly the same in the conservative case --it is even simpler since  $\lambda^1_\alpha< \lambda^2_\alpha$ readily implies that $\mu$ is hyperbolic). Also the positivity of the entropy was not used here. 
\end{rem}

\noindent \textit{Step 3.2. Measure classification and conclusion.} 

Roda's measure classification theorem~\cite[Thm 1.1.1]{roda} gives a classification 
of hyperbolic stationary measures  on compact surfaces with $\lambda^1+\lambda^2\geq 0$, 
which applies equally well for 
compactly supported measures in $\C^2$.
Applying the involution $(a,b,c,d)\mapsto (b,a,d,c)$, 
we can guarantee that  $\lambda^1+\lambda^2\geq 0$.
One main point in the proof is to show that unstable directions depend on 
random itineraries, which requires completely different arguments in the $\Aut(\C^2)$ case (see~\cite{nerriere}), but in the context of Case 3 this is granted for free. The upshot is that any such measure is invariant, and it is either atomic, which is not the case here because of the entropy assumption, or its stable and unstable conditionals have an ``extra invariance'' by some group of affine transformations in linearizing coordinates.
We do not need to define exactly what it means at this stage, but this property implies that the Hausdorff 
dimensions of the stable and unstable conditionals coincide, and are equal to 1 or 2. 
By the Ledrappier-Young formula (see~\cite{liu-qian}), this implies that  $\lambda^1+\lambda^2 = 0$. 
On the other hand,  since $\log\abs{\jac f}\neq 0$ and  $P\cap N$ is open, by~\eqref{eq:sum_exponents} 
we can perturb $\alpha$ to make $\lambda^1+\lambda^2 > 0$, which contradicts the above reasoning. 
This completes the proof of Theorem~\ref{thm:same_positive_entropy_measure} in the case where $\mu$ is $f$-ergodic. \qed

Assume now that  $\mu$ is only supposed to be $f$-invariant with $h (f, \mu)>0$
and let us briefly explain how to adapt the proof. We consider $g$ as before, and $\Gamma = \bra{f,g}$, which preserves $\mu$. By the ergodic decomposition theorem for group 
actions~\cite[Thm 4.2]{varadarajan} we can decompose $\mu$ into $\Gamma$-ergodic components as an integral of invariant measures  
$\mu   = \int \mu_{\mathcal P(x)} d\mu(x)$, where $\mathcal P$ is  the measurable partition  into 
$\Gamma$-ergodic components.  
 Since the entropy function is affine, we can find an atom $P\in \mathcal P$ such that 
$\mu_P$ is $\Gamma$-ergodic and $h(f, \mu_P)>0$. Thus, replacing $\mu$ by $\mu_P$
without loss of generality we may assume that  $\mu$ is $\Gamma$-ergodic (but not yet $f$-ergodic).

We review the 3 same cases as before. Assume first that there is a measurable 
$\Gamma$-invariant line field $E$. 
 Let $\mathcal Q$ be the measurable partition  into ergodic components as $f$-invariant measures
 (which is finer than $\mathcal P$) and 
the corresponding decomposition
 $\mu   = \int \mu_{\mathcal Q(x)} d\mu(x)$.  By the affine property of entropy, 
   there exists  a $f$-invariant 
 set $B$ of positive measure such that 
for any $x\in B$, $h_{\mu_{\mathcal Q(x)}}(f)>0$. 
In particular $f$ is non-uniformly hyperbolic along $B$, so  
along $B$, $E$ must coincide almost everywhere with one of $E^s_f$ or $E^u_f$. Reducing $B$ to a smaller 
invariant subset we may  assume that $E = E^s_f$ almost everywhere along $B$. 
Since $g$ preserves $\mu$ we can pick $k\geq 1$ such that 
$\mu(B\cap g^{k}(B))>0$. 
The automorphism $h:=g^{-k} f g^k$ is loxodromic, preserves $\mu$,    
and $g^{-k}(B)$ is a $h$-invariant set along which $h$ is 
is  non-uniformly hyperbolic   and $E$ coincides with $E^s_h$ almost everywhere.
We can now apply the argument of Case 1 to $f$ and $h$ on $B\cap g^{-k}B$, which yields that 
$W^s(f, x) = W^s(h, x)$ for $\mu$-almost every $x\in B\cap g^{-k}B$, hence $T^+_f= T^+_h$. Since
$T^+_h = c (g^k)^*T^+_f$, we get $T^+_f = c(g^k)^*T^+_f $. Since $g$ is loxodromic, this implies 
that $T^+_f = T^+_g$ or  $T^+_f = T^-_g$ so  $\mathrm{Geo}(g)  = \mathrm{Geo}(f)$, possibly up to replacing $g$ by $g\inv$. In Case 2 we argue in the same way. 

For Case 3, since $\mu$ is $\Gamma$-ergodic,  it  is also 
ergodic as a $\nu_\alpha$ stationary measure since this notion depends only on the $\Gamma = \bra{\supp(\nu_\alpha)}$-action. Since $f$ acts hyperbolically on a set of positive measure, 
 we can apply the invariance principle as before  to get that
$\lambda^1_{\alpha}<\lambda^2_\alpha$, and the final argument uses only the ergodicity of $\mu$ as a $\nu_\alpha$-stationary measure so we are done. 
\end{proof}

\section{The conservative case: results and questions}\label{sec:conservative}

\begin{thm}\label{thm:same_eq_measure_conservative}
Let $f$ and $g$ be  polynomial automorphisms of $\C^2$ of positive entropy 
such that $\mu_f = \mu_g$. Assume that $f$ and $g$ are conservative and one of them has a   disconnected Julia set. Then there exists $(M, N)\in (\Z^*)^2$ such that 
$f^N = g^M$. 
\end{thm}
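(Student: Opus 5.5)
We follow the proof of Theorem~\ref{thm:same_positive_entropy_measure} with $\mu:=\mu_f=\mu_g$, and replace the non-conservativity argument of Step~3.2 by an argument based on the disconnectedness of the Julia set. By Remark~\ref{rmk:purelox} and the subgroup classification of Lamy, it is enough to show that $\mathrm{Geo}(f)=\mathrm{Geo}(g)$. So assume $\bra{f,g}$ is non-elementary, replace $f,g$ by $f^N,g^N$ so that $\Gamma=\bra{f,g}$ is purely loxodromic and free, and aim for a contradiction. The measure $\mu$ is $f$-ergodic, so it is $\Gamma$-ergodic, and it has positive entropy $\log d_f$.

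Cases~1 and~2 (an invariant line field, or an invariant pair of line fields) go through unchanged. By Remark~\ref{rmk:conservative1} they never used dissipativity. They give $T^+_f=T^+_g$ up to replacing $g$ by $g\inv$, hence $\mathrm{Geo}(f)=\mathrm{Geo}(g)$. In Case~3, Step~3.1 also applies verbatim (Remark~\ref{rmk:conservative2}). Moreover, since $\abs{\jac f}=\abs{\jac g}=1$, \eqref{eq:sum_exponents} gives $\lambda^1_\alpha+\lambda^2_\alpha=0$ for every $\alpha\in\Delta$. Hence $\mu$ is hyperbolic as a $\nu_\alpha$-stationary measure for every $\alpha$, say for $\alpha=(1/4,1/4,1/4,1/4)$. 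Roda's classification~\cite{roda} then applies; the non-dependence of unstable directions on itineraries is again automatic in Case~3. Since $\mu$ is non-atomic, its stable and unstable conditionals carry the extra affine invariance, and so have the same Hausdorff dimension, equal to $1$ or $2$.

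The main step is to contradict this using the disconnectedness hypothesis. Say $J_f$ is disconnected. By the conservative analogue of the connectivity criterion of \S\ref{subs:loxo} (Bedford--Smillie), applied to both $f$ and $f\inv$, disconnectedness forces $\chi^+(\mu_f)>\log d$ and $\chi^-(\mu_f)<-\log d$. The Ledrappier--Young formula for $f$ then gives
\[
h_\mu(f)=\log d=\chi^+\,\delta^u=\abs{\chi^-}\,\delta^s,
\]
where $\delta^u$ and $\delta^s$ are the unstable and stable conditional dimensions of $\mu$ for $f$. Hence $\delta^u,\delta^s<1$. We then need to compare these deterministic conditional dimensions with the random ones entering Roda's theorem. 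In Case~3 the random unstable manifolds are not the $f$-unstable manifolds, so this comparison is the delicate point. The plan is as follows. Roda's extra invariance in the case ``dimension $1$'' should mean that the random unstable conditionals are Lebesgue measure along real-analytic curves, or along complex lines, in affine charts. Propagating this along the unstable manifolds, as in~\cite{stiffness, roda}, should give that $\mu$ is absolutely continuous along a real or complex lamination of positive measure. From this one expects either $\mu\ll\mathrm{Leb}$, or $\mu$ supported on a totally real surface. The case ``dimension $2$'' directly gives $\mu\ll\mathrm{Leb}_{\C^2}$, since $\lambda^1_\alpha+\lambda^2_\alpha=0$.

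We then exclude both alternatives using $\delta^u,\delta^s<1$. If $\mu\ll\mathrm{Leb}$, its $f$-conditionals along $f$-unstable manifolds would have dimension $2$, which contradicts $\delta^u<1$. If $\mu$ is supported on a totally real surface, then the $f$-conditionals would have dimension $1$ along curves, which contradicts $\delta^u<1$ once more; alternatively, the Bedford--Smillie--Kaufmann type classification would force $J_f$ to be connected. The main obstacle is to make this transfer rigorous: from Roda's equality of random dimensions to an equality of the form $\delta^u\in\set{1,2}$ for the deterministic map $f$. I would try first to use the affine invariance to show that $\mu$ has a well-defined local dimension, equal to $2$ or $4$ almost everywhere (Ledrappier--Young for the random system). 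Combined with the deterministic formula $\dim\mu=\delta^u+\delta^s<2$, this would give the contradiction.
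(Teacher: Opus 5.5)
Your last paragraph is the paper's argument, and it already closes the proof. The ``transfer'' you call the main obstacle is not needed, because of one observation you do not make: since $\mu$ is $\Gamma$-invariant, the invariant measure of the invertible skew product is $\mathscr m=\nu^\Z\times\mu$. Hence every fiber conditional $\mu_\xi$ on $\set{\xi}\times\C^2$ is $\mu$ itself. The random Ledrappier--Young formula (e.g.\ \cite{liu-xie}), applied to $\mu_\xi=\mu$, then computes $\mathrm{HDim}(\mu)$ directly from the random stable and unstable conditionals. These are the ones described by the Roda--Nerri\`ere dichotomy, so $\mathrm{HDim}(\mu)=2+2=4$ in the SRB case and $1+1=2$ in the totally real case. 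Young's formula for $f$ gives $\mathrm{HDim}(\mu)=\delta^u+\delta^s=2\log d/\chi^+(\mu_f)$, which is $<2$ exactly when $J_f$ is disconnected. So you never need to relate the random unstable manifolds to those of $f$. You only compare two computations of the dimension of one and the same measure.

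You should drop the middle part of your plan. Producing an invariant totally real surface carrying $\mu$ is, as the paper points out, a very difficult open problem; it is precisely what blocks removing the disconnectedness hypothesis. Even granting such a surface, your exclusion step does not work. Being carried by a totally real surface only gives $\delta^u\le 1$, which is compatible with $\delta^u<1$, so that branch produces no contradiction. Deriving $\mu\ll\mathrm{Leb}$ in the SRB case is likewise unnecessary. A minor slip: in Case~3 what comes for free is that the stable and unstable directions \emph{do} depend on the random itinerary, since there is no invariant line field. It is not their non-dependence.
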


Note that the connectedness of $J_f^*$ is equivalent to that of $J_f$ and $K_f$, and since 
$J_f^* = J_g^*$ the disconnectedness of $J_f$ is equivalent to that of $J_g$. 
No example of a conservative automorphisms with a connected Julia set is known at the moment --in the uniformly hyperbolic case it is a theorem that such mappings do not exist, see~\cite[Cor. A.3]{bs7}-- so that the theorem applies in all known cases.

\begin{proof}
We reproduce the proof of Theorem~\ref{thm:same_eq_measure_dissipative}. As observed in Remarks~\ref{rmk:conservative1} and~\ref{rmk:conservative2}, we can go all the way to Step 3.2 without modification --we only have to deal with the ergodic case here. 
Since for every $\alpha\in \Delta$ we have $\lambda^1_\alpha < \lambda^2_\alpha$ and $\lambda^1_\alpha + \lambda^2_\alpha = 0$ by~\eqref{eq:sum_exponents}, $\mu_f =\mu_g$ is hyperbolic as a stationary measure. Put $\nu = \nu_\alpha$ and $\mu = \mu_f$. 
Since $\mu$ is invariant,  it lifts to a product measure 
$\nu^\Z\times \mu$ in $\Gamma^\Z\times \C^2$. For $\nu^\Z\times \mu$-a.e. $(\xi, x)$ we have Pesin stable and unstable manifolds $W^{s/u}(\xi, x)$, biholomorphic to $\C$.  We can define increasing 
measurable partitions $\eta^{s/u}$ so that for  $\nu^\Z\times \mu$-a.e. $(\xi, x)$ the 
 atom $\eta^{s/u}(\xi, x)$ is an  open subset of $W^{s/u}(\xi, x)$.   
Since $\mu$ is not atomic, the measure classification theorem, 
which is a combination of \cite{roda} and \cite{nerriere},
 asserts  that only two situations can happen:
 \begin{itemize}
 \item either for $\nu^\Z\times \mu$-a.e. $(\xi, x)$, the conditional $\mu(\cdot \vert \eta^{s/u}(\xi,x))$ is absolutely continuous with respect to Lebesgue measure on $W^{s/u}(\xi, x)$;
 \item or for $\nu^\Z\times \mu$-a.e. $(\xi, x)$ there exist analytic curves $U^{s/u}(\xi, x)\subset W^{s/u}(\xi, x)$,  corresponding to real lines in the normalizing coordinates, such that  $\mu(\cdot \vert \eta^{s/u}(\xi,x))$ is absolutely continuous with respect to Lebesgue measure on 
 $U^{s/u}(\xi, x)$.
 \end{itemize}
Here we have identified the conditionals $\mu_\xi$ 
of $\nu^\Z\times \mu$ in $\set{\xi}\times \C^2$ with $\mu$. 
We will refer to these two situations as the SRB and totally real case, respectively. 
To complete the proof of the theorem, we will show that the SRB case does not happen, and that in the totally real case the Julia set must be  connected. 

This will follow from a dimension computation. Indeed, by~\cite[Thm 3.2]{bs3}, the Lyapunov exponents of $\mu_f$ viewed as an invariant measure satisfy $\chi^+(\mu_f) = - \chi^-(\mu_g) \geq \log d$, and by~\cite{bs6}   equality holds if and only if $J_f$ (or equivalently $J_f^*$) is connected. From the Ledrappier-Young  formula~\cite{young} it  follows that 
$\mathrm{HDim}(\mu)\leq 2$ with equality if and only if $J_f$ is connected. 
On the other hand, we can apply the same formula for the conditional measures on $\set{\xi}\times \C^2$ (see e.g.~\cite{liu-xie}) (all of them equal to $\mu$), which implies that $\mathrm{HDim}(\mu)$ equals 4 in the SRB case and 2 in the totally real case. This concludes the proof. 
\end{proof}

\begin{conj}
Theorem~\ref{thm:same_eq_measure_conservative} holds without the disconnectedness assumption. 
\end{conj}

Besides the fact that no conservative automorphisms with connected Julia sets have been spotted so far,   
additional  strong evidence for this conjecture arises from~\cite{rigidity}. First, it is expected that in the 
totally real case, there should  indeed exist 
some totally real invariant surface carrying $\mu$, whose trace 
along the stable and unstable manifolds gives the $U^{s/u}$ curves. This would force the stable and 
unstable slices of $J$ to be contained in smooth curves, a possibility
 that was excluded in~\cite[Thm C]{rigidity}. Establishing the existence of such a surface seems to be a very difficult problem, but the following weaker variant would be sufficient to conclude. The ``joint integrability'' statement  in \cite[Prop 2.6.9]{roda} asserts that on a set of positive measure of fibers 
 $\set{\xi}\times \C^2$, there is a positive measure subset relative to $\mu_\xi$ that is a web of pairwise intersecting analytic curves, parameterized by a Cantor set of positive measure in a line, and  whose topology is compatible with that of lying on a surface. If we could show that this web can be extended to a rectifiable 2-set (even not a dynamically defined one), then~\cite[Thm 3.8]{rigidity} would yield a contradiction.

\section{Further results  in the non-conservative case}\label{sec:further}

We present results towards a complete classification of stationary measures in the following situation: $f$ is dissipative and hyperbolic and $g$ is arbitrary. 
The upshot is that in such a situation non-trivial stationary measures should not exist unless $\bra{f,g}$ is elementary.

If $\nu$ is a finitely supported measure on $\bra{f,g}$, we say that 
$\nu$ is non-dissipative if the average Jacobian $\int \log\abs{\jac(h)} \, \d\nu(h)$ is non-negative. 
Note that in the dissipative case,  
non-trivial stationary measures may exist: indeed if $f$ admits  an attracting fixed point and $\Omega$ is an open set such that $f (\Omega)\Subset \Omega$, we can 
 take $g$ to be a small perturbation of $f$ and  generate an attracting IFS  from $(f,g)$ in $\Omega$. 

\begin{thm}\label{thm:full_classification}
Let $f$, $g$ be loxodromic automorphisms such that the group 
$\bra{f,g}$ is non-elementary and purely loxodromic.  Assume that $f$ is dissipative and hyperbolic.
Let $\nu$ be a non-dissipative, finitely supported measure on $\bra{f,g}$, whose support generates $\bra{f,g}$, and let $\mu$ be a $\nu$-stationary measure. 

Then $\mu$ is atomic, unless the following situation happens: $\mu$ is $\bra{f,g}$-invariant, supported on $J_f$  and of zero entropy for every $h\in \bra{f,g}$, 
$g$ preserves the pair of fields 
$\set{E^s_f, E^u_f}$, but exchanges them on a set of positive measure, and  for every finitely supported 
measure $\nu'$ on $\bra{f,g}$ one the Lyapunov exponents of  $\mu$ (viewed as a $\nu'$-stationary measure)    vanishes.
\end{thm}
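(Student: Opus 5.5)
We will follow the strategy of the proof of Theorem~\ref{thm:same_positive_entropy_measure}, but now starting from a stationary (rather than invariant) measure, so the first task is to reach invariance. Fix $\nu$ and an ergodic $\nu$-stationary measure $\mu$; the general case follows by ergodic decomposition of stationary measures. Since $\bra{f,g}$ is non-elementary and purely loxodromic, the stiffness results recalled in \S\ref{subs:random_GL2} apply: Nerrière's theorem~\cite{nerriere} together with Roda's classification~\cite{roda}. Non-dissipativity of $\nu$ gives $\lambda_1+\lambda_2\geq 0$ by~\eqref{eq:lyapunov}. If $\lambda_1\geq 0$, the Invariance Principle of Crauel and Avila--Viana gives invariance directly. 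If $\lambda_1<0<\lambda_2$, the classification says that $\mu$ is $\bra{f,g}$-invariant and either atomic or has conditionals with extra affine invariance. In the latter case $\lambda_1+\lambda_2=0$ by Ledrappier--Young. We would like to rule this out using that $f$ is dissipative, but since $\nu$ is fixed we cannot perturb it. So we keep this alternative and aim to contradict it later, once invariance is known.

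Assume now that $\mu$ is non-atomic and $\Gamma=\bra{f,g}$-invariant. If $h(f',\mu)>0$ for some $f'\in\Gamma$, then every element of $\Gamma$ has zero entropy by Theorem~\ref{thm:same_positive_entropy_measure}. Indeed, $f'$ is loxodromic and $\mu$ is invariant under a second loxodromic element in general position, which would contradict non-elementarity. If $f'$ is conservative, we can instead apply the same argument to $f$ after checking that $\mu$ has positive $f$-entropy, or we use that $f$ is dissipative in Step~3.2 of that proof. Since $f$ is hyperbolic, $\mu$ is an $f$-invariant measure on the non-wandering set. We also need to show $\supp\mu\subset J_f$. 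If $\mu$ charged the basin of an attracting cycle of $f$ and were $f$-invariant, it would be atomic on the cycle. Points of $K^+_f\setminus J^+_f$ outside basins do not exist for hyperbolic dissipative $f$. Points escaping under $f^{\pm}$ are ruled out by Poincaré recurrence together with compact support. Hence $\supp\mu\subset K_f^+\cap K_f^-$ minus the attracting cycles, which is contained in $J_f$. On $J_f$ the map $f$ is uniformly hyperbolic, so the continuous splitting $E^s_f\oplus E^u_f$ exists $\mu$-a.e., even with zero entropy.

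Next we run the three-case analysis of the proof of Theorem~\ref{thm:same_positive_entropy_measure} for the pair $(f,g)$. In Case~3, Step~3.1 produces $\alpha$ with $\mu$ hyperbolic for $\nu_\alpha$. Because $\log\abs{\jac f}\neq0$, we can perturb $\alpha$ so that $\lambda^1_\alpha+\lambda^2_\alpha>0$, and Roda's classification then forces $\mu$ to be atomic, a contradiction. The same perturbation also kills the affine-invariance alternative left open above. In Case~1 the invariant line field is $E^s_f$ or $E^u_f$. The disjointness-versus-tangency argument of~\cite[Lem.~6.4]{bls} needs non-atomic conditionals along stable leaves, which we no longer get from entropy. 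We plan to replace it with uniform hyperbolicity on $J_f$: the local stable manifolds of $f$ have uniform size, and $g$ maps $E^s_f$ to $E^s_f$. We then argue that $g$ maps $W^s_f$-leaves to $W^s_f$-leaves, so $J^+_f$ is $g$-invariant. This gives $T^+_f=T^+_g$ and $\mathrm{Geo}(f)=\mathrm{Geo}(g)$, contradicting non-elementarity.

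In Case~2, if $g$ preserves $E^s_f$ on a set of positive measure, we argue as in Case~1. Otherwise $g$ preserves the pair $\set{E^s_f,E^u_f}$ but exchanges the two fields on a set of positive measure, which is exactly the exceptional situation of the statement. Finally, for an arbitrary $\nu'$, a non-vanishing pair of exponents would either make $\mu$ hyperbolic for $\nu'$ or allow invariance-principle arguments, and both routes lead back to atomicity. The main obstacle is the zero-entropy Case~1: promoting a.e.\ tangency of stable directions to coincidence of stable manifolds without non-atomic leafwise conditionals. We would try a $\lambda$-lemma / graph-transform argument using the uniform hyperbolicity of $f$ on $J_f$.
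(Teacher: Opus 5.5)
Most of your plan matches the paper: the reduction to invariance, the localization of $\supp\mu$, the zero-entropy claim and Case~3. The genuine gap is in Case~1 and in the first branch of your Case~2, and it is not a technicality. In Theorem~\ref{thm:same_positive_entropy_measure}, tangency of stable directions could be promoted to coincidence of stable manifolds only because positive entropy makes the conditionals of $\mu$ on local $f$-stable leaves non-atomic. That supplies uncountably many points of a single leaf at which to apply \cite[Lem.~6.4]{bls}.

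At zero entropy this input is gone, and uniform hyperbolicity does not restore it: nothing forces $\supp\mu\cap W^s_{\loc}(x)$ to be large. The natural graph-transform move, pulling back $W^s_{g\inv fg}(f^nx)$ by $f^{-n}$, only produces $W^s_{f^{-n}g\inv fgf^n}(x)$. That says nothing about $W^s_{g\inv fg}(x)$. Your Case~2 shows the danger: you claim a contradiction as soon as $g$ preserves $E^s_f$ on a set of positive measure. In the exceptional situation of the statement, $g$ preserves $E^s_f$ off the exchange set. So your plan would exclude that situation whenever the exchange set is not conull, and, applied to $\bra{f,g^2}$, also when it is. This is precisely what the paper says it cannot do (Remark~\ref{rem:unlikely}).

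The missing idea is that Case~1 needs no geometry. Since $\mu$ is $\Gamma$-invariant and $V$ is a $\Gamma$-invariant line field, say $V=E^u_f$, one Lyapunov exponent of $\mu$ as a $\nu_\alpha$-stationary measure is
\[ \chi_\alpha=\int\lrpar{a\log\norm{Df\rest V}+b\log\norm{Df\inv\rest V}+c\log\norm{Dg\rest V}+d\log\norm{Dg\inv\rest V}}\,\d\mu . \]
Putting almost all the weight on $f\inv$ makes $\chi_\alpha$ close to $-\chi^+(f,\mu)$. This is bounded away from $0$ by uniform hyperbolicity on $J_f$, whatever the entropy. Meanwhile, by~\eqref{eq:sum_exponents}, the sum of the exponents is close to $-\log\abs{\jac f}>0$. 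Hence $\mu$ is hyperbolic for this $\nu_\alpha$ with $\lambda^1_\alpha+\lambda^2_\alpha>0$. The Roda--Nerri\`ere classification then forces $\mu$ to be atomic; thanks to Nerri\`ere it does not require the absence of invariant line fields, and you already invoke it that way in your first paragraph. This is the paper's argument.

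Case~2 then needs no reduction to a geometric statement. If the exchange set is null, each of $E^s_f,E^u_f$ is $\Gamma$-invariant and the above applies. If it is conull, pass to $\bra{f,g^2}$. Otherwise you are in the exceptional situation.

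A minor slip: $K^+_f\cap K^-_f$ minus the attracting cycles is not contained in $J_f$, since unstable manifolds of saddles enter the basins. What recurrence gives is that $\mu$ charges no basin outside the cycles, and that suffices.
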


This last property is expected to be highly impossible; we will discuss this further in Remark~\ref{rem:unlikely} below. Is seems reasonable to conjecture that under these assumptions, non-atomic stationary measures do not exist (see also Question 1.4 in~\cite{nerriere}). 
We can get an unconditional statement in a perturbative case:
 
\begin{cor}\label{cor:full_classification}
If $f, g$ are as in Theorem~\ref{thm:full_classification} and $g$ is sufficiently close to $f$, then every 
$\nu$-stationary measure is atomic.  In particular if  $X$ is a compact set such that $f(X)= g(X) =X$, then 
$X$ must contain a common periodic orbit of $f$ and $g$. 
\end{cor}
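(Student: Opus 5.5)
We deduce Corollary~\ref{cor:full_classification} from Theorem~\ref{thm:full_classification}. The task is to show that the exceptional situation there cannot occur when $g$ is $C^1$-close to $f$ on a neighborhood of $J_f$.

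Since $f$ is hyperbolic, $J_f$ carries a continuous $Df$-invariant splitting $E^s_f\oplus E^u_f$ with uniform contraction and expansion. By standard cone-field arguments, this gives a neighborhood $U$ of $J_f$ and continuous cone fields $\mathcal C^u$ and $\mathcal C^s$ around $E^u_f$ and $E^s_f$. These cones are strictly invariant under $Df$ (resp. $Df\inv$), with expansion of vectors in $\mathcal C^u$ under $Df$.

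If $g$ is close enough to $f$ on $U$, the same cones are strictly invariant under $Dg$, and $\mathcal C^u$ is mapped into itself by $Dg$. Two consequences follow.
\begin{itemize}
\item Since $\mathcal C^u\cap\mathcal C^s=\{0\}$, $Dg$ cannot send $E^s_f(x)$ to $E^u_f(g(x))$. So $g$ cannot exchange the two fields on a set of positive measure.
\item Vectors in $\mathcal C^u$ are uniformly expanded by every element of $\supp(\nu)=\{f,g,\dots\}$. Here we take $\nu$ supported on $\{f,g\}$, or more generally on words close to $f$. Hence for any such $\nu'$ the top exponent is $\geq \log\kappa>0$, and symmetrically the bottom exponent is $\leq -\log\kappa<0$. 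So no exponent vanishes.
\end{itemize}
This rules out the exceptional case, and Theorem~\ref{thm:full_classification} gives that $\mu$ is atomic.

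One point needs care. The measure $\nu$ in Theorem~\ref{thm:full_classification} may charge $f\inv$, $g\inv$ or longer words. For those we use that $Df\inv$ and $Dg\inv$ preserve $\mathcal C^s$ and expand it, which still gives uniform separation of exponents.

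We also need $\supp\mu\subset U$. In the exceptional case $\mu$ is supported on $J_f$, which is what we must exclude, so it suffices to argue on $J_f$. The "closeness" should be formulated uniformly on a fixed neighborhood of $J_f$.

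For the second assertion, let $X$ be compact with $f(X)=g(X)=X$. Take $\nu=\frac12(\delta_f+\delta_g)$. Since $|\jac f|<1$, we have $\log|\jac g|\approx\log|\jac f|<0$, so this $\nu$ is dissipative and the theorem does not apply directly. Instead take $\nu=a\delta_f+b\delta_{f\inv}+c\delta_g+d\delta_{g\inv}$ with weights chosen so that the average Jacobian is $\geq 0$. This is possible since $X$ is also invariant under $f\inv$ and $g\inv$.

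By compactness of $X$ and Markov–Kakutani type averaging of the operator $\int h_*\,d\nu(h)$ on probability measures on $X$, there is a $\nu$-stationary measure $\mu$ supported on $X$. By the first part, $\mu$ is atomic. A finite stationary atomic measure for a measure whose support generates $\bra{f,g}$ is invariant: by the maximum principle on atom weights, the set of atoms of maximal mass is permuted. So its support is a finite $\bra{f,g}$-invariant set in $X$, hence a common periodic orbit of $f$ and $g$.

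The main obstacle is making the perturbative cone argument uniform for words appearing in $\supp\nu'$ for \emph{every} finitely supported $\nu'$. The fallback is that only one $\nu'$ with both exponents nonzero is needed to contradict the last clause of the exceptional case, and $\nu'=\frac12(\delta_f+\delta_g)$ suffices.
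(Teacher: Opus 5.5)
Your argument is correct and is essentially the paper's: the paper also excludes the exceptional case of Theorem~\ref{thm:full_classification} by noting that, for $g$ close to $f$, $Dg_x$ maps $E^u_f(x)$ near $E^u_f(f(x))$, hence near $E^u_f(g(x))$ by continuity of the hyperbolic splitting on $\supp\mu\subset J_f$, which is incompatible with an exchange. It likewise gets the second statement from the existence of a stationary measure on $X$, and your explicit non-dissipative choice of $\nu_\alpha$ together with the maximal-atom argument fill in details the paper leaves implicit.

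One correction: the claim in your ``point needs care'' paragraph, that measures charging $f\inv$ and $g\inv$ still give uniformly separated exponents, is false. Random products mixing $f$ and $f\inv$ preserve no cone field; for instance, $\frac12(\delta_f+\delta_{f\inv})$ has both exponents equal to zero. Nothing depends on this claim, though: the no-exchange argument alone, or your fallback $\nu'=\frac12(\delta_f+\delta_g)$, already rules out the exceptional case.
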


\begin{que}\label{q:is_X_finite}
Let $f$ and $g$ be two polynomial automorphisms such that $\bra{f,g}$ is non-elementary. 
Must  any compact set $X$ such that $f(X)= g(X) =X$ be  finite?  
\end{que}

\begin{proof}[Proof of Theorem~\ref{thm:full_classification}]
In this setting, from \cite[Thm 1.1]{nerriere} we know that $\mu$ is compactly supported and 
the Roda-Nerrière classification of stationary measures~\cite{roda, nerriere}
shows that if $\mu$ is hyperbolic as a stationary measure, then it is atomic, 
hence finitely supported, since there are only finitely many 
$\Gamma$-periodic orbits by \cite{dujardin-favre:DMM, abboud:periodic}. 

We want to study the remaining cases, that is when $\mu$ is not hyperbolic. 
Without loss of generality, we may suppose that $\mu$ is ergodic. 
Since $\nu$ is non-dissipative, the Lyapunov exponents satisfy 
 $  \lambda^2 \geq \lambda^1\geq 0$ so $\mu$ is invariant by the Invariance Principle. So from this point we can freely choose  $\nu$. Let is  pick $\nu = \nu_\alpha$ as in Theorem~\ref{thm:same_eq_measure_dissipative}.  
 Observe  that  since $f$ is hyperbolic and $\mu$ is $f$-invariant,  if 
  $\supp(\mu)$ is not contained in $J_f$, then $\mu$ is automatically atomic, so we may assume 
 $\supp(\mu)\subset J_f$. In particular   
 $E^s_f$ and $E^u_f$ are well-defined and continuous on $\supp(\mu_f)$ 
As in Case 3 of Theorem~\ref{thm:same_positive_entropy_measure} (see Remark~\ref{rmk:conservative2}), if there is no $\Gamma$-invariant line field or pair of invariant line fields, we can find $\alpha$ such that $\mu$ is hyperbolic as a $\nu_\alpha$-stationary measure, hence atomic. If there is a $\Gamma$-invariant line field $V$, then it must coincide with 
  $E^s_f$ or $E^u_f$, say $E^u_f$ for definiteness, 
  and we can calculate the Lyapunov exponent along $V$ by the formula
  $$\chi = \int \lrpar{a\log\norm{Df\rest V}+ b  \log\norm{Df\inv\rest V} +
  c  \log\norm{Dg\rest V} + d  \log\norm{Df\rest V} }\, \d\mu. $$
  In particular, choosing $a$ very close to 1 we get that $\chi$ is very close to 
  $\chi(f, V)=\chi^+(f,\mu)>0$. Then the 
  second Lyapunov  exponent is very close to $\chi^-(f,\mu)<0$, and $\mu$ is hyperbolic again. 
  
So if $\mu$ is not atomic 
 we must be in the situation where $\Gamma$ preserves $\set{E^s_f, E^u_f}$ but $g$ exchanges them 
 on a set of positive measure (but not full measure, otherwise we conclude by replacing $g$ by $g^2$),  
 %In this case we will explain that unless an exceptional property happens, the two  exponents coincide, so they can be chosen to be both positive, and again $\mu$ is atomic. 
and the zero entropy statement follows from Theorem~\ref{thm:same_eq_measure_dissipative}.
The exponents are non-negative, and choosing $\alpha$ such that $\lambda^2+\lambda^1>0$, we must have $\lambda^2>0 = \lambda_1$, because  as already observed, 
positivity of  both exponents would force $\mu$ to be atomic. 
\end{proof}

\begin{rem}\label{rem:unlikely} 
To convince the reader that the remaining situation in Theorem~\ref{thm:full_classification} is truly 
exceptional, let us push its proof a little further. 
%Let $\Sigma_ +  = \set{f,g}^\N$, endowed with the one-sided shift $\sigma$ and the shift invariant measure $\nu^\N$. Then,   
%to the random dynamics  we associate a   skew-product $F$
% on $\Sigma_+\times \C^2$ defined by $F(\omega, x)  =(\sigma(\omega), f_{\omega_0}(x))$. The tangent action of the random dynamics becomes a $\mathrm{GL}_2(\C)$ cocycle over $F$. 
Consider the $\GL_2(\C)$ cocycle over $F_+(\omega, x)  =(\sigma(\omega), f_{\omega_0}(x))$ as in 
\S~\ref{subs:random_GL2}. 
 This cocycle preserves a pair of fields of subspaces $\set{E^s_f, E^u_f}$ by exchanging them on a set $Z$ of positive measure. If $\nu = \nu_\alpha$ and 
 $A$ is the set of points $x$ at which  $Dg_x$ exchanges $E^s_f(x)$ and $E^u_f(x)$, then 
 $Z = [g]\times A \cup [g\inv]\times g(A)$, where we denote by $[h]$ the cylinder of sequences starting with $h$. The ``generic'' case in such a situation is 
 that the    Lyapunov exponents of the cocycle coincide. If so, choosing the weights so that the average Jacobian is positive, we get two positive exponents and again $\mu$ must be atomic.  
So the exponents are distinct, and arguing  as in Knill~\cite{knill} (see also \cite[\S 9.2.2]{viana:book}) shows that $F_Z^2$ is not ergodic, where $F_Z$ denotes the first return map in $Z$ (which is ergodic because $F$ is). The fact that this property holds for all weights $\alpha$ suggests that the return of the $\bra{f,g}$-orbits in $A$ has a quite peculiar structure. Unfortunately we have not been able to exclude this possibility. 
\end{rem}

\begin{proof}[Proof of Corollary~\ref{cor:full_classification}]
If $g$ is close to $f$, then for every $x\in \supp(\mu)$, 
$Dg_x$  must map $E^u_f(x)$ close to $E^u_f(f(x))$, and likewise for 
$E^s$,  a property that  is not compatible with the exchange of directions in Theorem~\ref{thm:full_classification}. So every stationary measure is atomic. The second statement follows from the fact that any $\bra{f,g}$-invariant compact set must support a stationary measure. 
\end{proof}

\begin{rem}\label{rem:Jf=Jg}
Let $f$, $g$ be loxodromic automorphisms such that  $J_f = J_g$. Then 
$\mu_f\asymp \mu_g$ by~\cite{levenberg} or  Theorem~\ref{thm:GZ}.
Assume that $\bra{f,g}$ is non-elementary and 
 fix $N\in \Z$ such that $\bra{f^N, g^N}$ is free. If we can find 
 a finitely supported and non-dissipative probability measure 
 $\nu$ on $\bra{f^N, g^N}$, 
admitting a stationary measure $\mu$ 
in the class of $\mu_f$ and $\mu_g$, then by~\cite{nerriere} $\mu$ is invariant, and by ergodicity we get   
$\mu = \mu_f = \mu_g$. 
\end{rem}

\appendix

\section{Automorphisms and the  harmonic measure}

If $K$ is a non-pluripolar compact  subset in $\C^k$, the pluricomplex Green 
 function $V_K$
is by definition  $$V_K\colon z\mapsto V_K(z) = \sup\set{u(z), \ u\in \el, \ u\rest{K}\leq 0},$$ whose upper semicontinuous regularization $V_K^*$ is psh. Here $\el$ (resp $\el^+$)
is the Lelong class of psh functions in $\C^k$ such that $u(z)\leq \log\norm{z}+ O(1)$ 
(resp. $u(z)= \log\norm{z}+ O(1)$) at infinity. The pluricomplex harmonic  measure\footnote{We prefer this terminology to ``equilibrium measure'' to avoid the confusion with the dynamical one.} 
is by definition 
$\mu_K = (\d\d^c V_K^*)^k$, which is a probability measure (thanks to the internal normalization in $\d\d^c$) supported on $K$.  
The reader is referred to   Klimek's monograph~\cite{klimek} for more details and references. 

For notational simplicity, let us  write $V_K$ for $V_K^*$ 
(note that the equality $V_K = V_K^*$ holds when $K$ is said  regular, which is the case for $K_f$).
If  $\varphi$ is a non-linear  automorphism of $\C^k$, then  
  $(\d\d^c V_K\circ \varphi\inv)^k  = \varphi_* \mu_K$, 
  but $V_K\circ \varphi\inv$ 
is not in $\el$, 
thus $V_K\circ \varphi\inv \neq   V_{\varphi(K)}$. 
This in an indication  that $\varphi_*\mu_K $ should in general 
differ from $ \mu_{\varphi(K)}$, even if  it could  be the case that  equality holds  
for some other reason: for instance if $f$ is a regular automorphism  we have 
$$\mu_{K_f} =  (\d\d^c\max (G^+_f, G^-_f))^2 = f_*\mu_{K_f} = (\d\d^c\max (d\inv G^+_f, dG^-_f))^2.$$
Surprisingly, I could not find any reference where this problem is discussed. 
The  main purpose in this section is to write down an explicit example 
where these two measures are distinct.

Let us start with a positive result. This  is a refinement of~\cite{levenberg},
which   was communicated to me by Vincent Guedj and Ahmed Zeriahi. 

\begin{thm}\label{thm:GZ}
Let $\varphi$ be  an automorphism of $\C^k$,  and put $d =\deg(\varphi)$ and $\delta = \deg(\varphi\inv)$. Then   
 $$\delta^{-k} \varphi_*\mu_K\leq \mu_{\varphi(K)}\leq d^k\varphi_*\mu_K.$$
\end{thm}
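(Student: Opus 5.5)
The plan is to compare the Green functions $V_{\varphi(K)}$ and $V_K\circ\varphi\inv$, and then transfer the comparison to Monge--Ampère measures. The tool for the transfer is the standard domination principle for Monge--Ampère measures of functions in the Lelong class, in its comparison-of-masses form.

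\emph{Step 1: degree estimates.} Since $\varphi\inv$ has degree $\delta$, we have $\log\norm{\varphi\inv(z)}\leq \delta\log\norm{z}+O(1)$ at infinity. Hence $u:=\delta\inv V_K\circ\varphi\inv$ is psh, satisfies $u\leq \log\norm z+O(1)$, and vanishes on $\varphi(K)$ (quasi-everywhere, which is enough after regularization). So $u\in\el$ and $u\leq V_{\varphi(K)}$. Symmetrically, $d\inv V_{\varphi(K)}\circ\varphi\in\el$ vanishes on $K$, so $V_{\varphi(K)}\leq d\, V_K\circ\varphi\inv$. Altogether
$$\delta\inv\, V_K\circ\varphi\inv\;\leq\; V_{\varphi(K)}\;\leq\; d\, V_K\circ\varphi\inv,$$
and all three functions vanish on $\varphi(K)$ (for $K$ regular; in general, off a pluripolar set, which carries no mass).

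\emph{Step 2: from functions to measures.} We use the following comparison lemma: if $u\leq v$ are locally bounded psh functions with $u=v$ on a compact set $L$, and $(\d\d^c v)^k$ is supported in $L$, then $\mathbf 1_L(\d\d^c v)^k\leq \mathbf 1_L(\d\d^c u)^k$. This holds because on the contact set $\{u=v\}$ the larger function has the smaller Monge--Ampère mass; it follows from the maximum principle $(\d\d^c\max(u,v))^k\geq \mathbf 1_{\{u\geq v\}}(\d\d^c u)^k+\mathbf 1_{\{u<v\}}(\d\d^c v)^k$ of Demailly.

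Take $L=\varphi(K)$, $v=V_{\varphi(K)}$ and $u=\delta\inv V_K\circ\varphi\inv$. The measure $(\d\d^c u)^k=\delta^{-k}\varphi_*\mu_K$ is supported on $L$, and so is $\mu_{\varphi(K)}$. The lemma applied to the upper bound, with $V_{\varphi(K)}$ playing the role of the smaller function against $d\,V_K\circ\varphi\inv$, gives $\mu_{\varphi(K)}\leq d^k\varphi_*\mu_K$. Applied to the lower bound, it gives $\delta^{-k}\varphi_*\mu_K\leq\mu_{\varphi(K)}$.

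\emph{Main obstacle.} The delicate point is justifying the comparison lemma when the functions are merely locally bounded and not continuous, and when $K$ is not regular. Here we must pass to $V_K^*$ and handle equality only outside a pluripolar set, which is negligible for measures with bounded potentials. The first thing to try is to use the Bedford--Taylor theory directly, via the inequality $\mathbf 1_{\{u=v\}}(\d\d^c\max(u,v))^k\geq\mathbf 1_{\{u=v\}}(\d\d^c u)^k$ applied with $\max(u,v)=v$. If that fails, the fallback is to approximate $K$ by regular compact neighbourhoods $K_\epsilon\downarrow K$, for which $V_{K_\epsilon}\uparrow V_K^*$, prove the inequalities there, and pass to the limit using continuity of the Monge--Ampère operator along monotone sequences.
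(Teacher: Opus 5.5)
Your route is the paper's: bound $V_{\varphi(K)}$ above and below by multiples of $V_K\circ\varphi\inv$ using the degrees, then transfer to measures with a contact-set comparison principle. Your Step 1 is correct. Comparing $V_{\varphi(K)}\leq d\,V_K\circ\varphi\inv$ directly is equivalent to the paper's comparison of $d\inv V_{\varphi(K)}\circ\varphi$ with $V_K$.

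\textbf{The comparison lemma in Step 2 is stated backwards.} If $u\leq v$, then $\max(u,v)=v$, and Demailly's inequality reads
$$(\d\d^c v)^k\geq \mathbf{1}_{\{u=v\}}(\d\d^c u)^k+\mathbf{1}_{\{u<v\}}(\d\d^c v)^k.$$
This gives $\mathbf{1}_{\{u=v\}}(\d\d^c u)^k\leq \mathbf{1}_{\{u=v\}}(\d\d^c v)^k$. So on the contact set the \emph{larger} function carries the \emph{larger} Monge--Amp\`ere mass. Heuristically, $v-u\geq 0$ attains its minimum there. This is exactly the lemma the paper quotes from Guedj--Zeriahi.

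Your version is false. For $k=1$, take $u=\tfrac12\log^+|z|\leq v=\log^+|z|$ and $L$ the unit circle. Your lemma would give $\sigma\leq\tfrac12\sigma$ for normalized arc length $\sigma$. In your own setting, with $u=\delta\inv V_K\circ\varphi\inv$ and $v=V_{\varphi(K)}$, it would give $\mu_{\varphi(K)}\leq\delta^{-k}\varphi_*\mu_K$. That is impossible by comparing total masses as soon as $\delta>1$.

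Your two applications actually draw the conclusions of the \emph{correct} inequality, not of the lemma you state. Your final paragraph also writes the correct inequality $\mathbf{1}_{\{u=v\}}(\d\d^c v)^k\geq\mathbf{1}_{\{u=v\}}(\d\d^c u)^k$. So the write-up is internally inconsistent rather than aimed at a false conclusion, and the fix is straightforward:
\begin{itemize}
\item Restate the lemma as: if $u\leq v$ are locally bounded psh functions, $L\subset\{u=v\}$, and the Monge--Amp\`ere measure of the \emph{smaller} function $u$ is carried by $L$, then $(\d\d^c u)^k\leq(\d\d^c v)^k$.
\item This hypothesis holds in both of your applications, since $\varphi_*\mu_K$ and $\mu_{\varphi(K)}$ both live on $\varphi(K)$.
\item With this correction, your Step 2 is the paper's argument.
\end{itemize}

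The fallback approximation by regular compacta is unnecessary. The comparison principle holds for locally bounded psh functions, and $V_K^*$ and $V_{\varphi(K)}^*$ are locally bounded. The subset of $K$ where $V_K^*>0$ is pluripolar, and likewise for $\varphi(K)$. Such sets are not charged by these Monge--Amp\`ere measures, so the measures are concentrated on the contact set without any regularity assumption on $K$.
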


The theorem follows rather directly from the following comparison principle.

\begin{lem}[see {\cite[Cor. 3.28]{guedj-zeriahi:book}}]
If $u$ and $v$ are locally bounded psh functions in $\Omega\subset \C^k$ 
such that $u\leq v$ in $\Omega$, then 
$$\mathbf{1}_{\set{u=v}} (\d\d^cu)^k \leq \mathbf{1}_{\set{u=v}} (\d\d^cv)^k$$ as  Borel measures in $\Omega$. 
\end{lem}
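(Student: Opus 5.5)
The plan is to interpolate between $u$ and $v$ by the functions $v_\e := \max(u, v-\e)$, $\e>0$. These are locally bounded and psh in $\Omega$, and they increase to $\max(u,v) = v$ as $\e \downarrow 0$. The proof then compares $(\d\d^c u)^k$ and $(\d\d^c v)^k$ through the intermediate measures $(\d\d^c v_\e)^k$ in two steps.

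First, locality. The set $O_\e := \set{u > v-\e}$ is open in the plurifine topology, since $u-v$ is a difference of psh functions and hence plurifinely continuous. On $O_\e$ we have $v_\e = u$. By the plurifine locality of the Bedford--Taylor Monge--Amp\`ere operator,
$$\mathbf{1}_{O_\e}(\d\d^c v_\e)^k = \mathbf{1}_{O_\e}(\d\d^c u)^k.$$
Since $\set{u=v}\subset O_\e$, this gives
$$\mathbf{1}_{\set{u=v}}(\d\d^c u)^k = \mathbf{1}_{\set{u=v}}(\d\d^c v_\e)^k \quad \text{for every } \e>0.$$
This step is a direct citation of standard pluripotential theory.

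Second, the limit $\e\to 0$. Weak convergence alone does not control mass on the Borel set $\set{u=v}$, so I will approximate its indicator from above by quasi-continuous functions. Fix a continuous compactly supported $\chi\geq 0$ on $\Omega$, and for $\delta>0$ set
$$h_\delta := \chi\cdot\max\bigl(0, 1-(v-u)/\delta\bigr).$$
The function $h_\delta$ is bounded, compactly supported and quasi-continuous, because psh functions are quasi-continuous. Moreover $h_\delta \geq \chi\mathbf{1}_{\set{u=v}}$, and $h_\delta$ decreases to $\chi\mathbf{1}_{\set{u=v}}$ as $\delta\to0$ (recall $u\leq v$). Hence
$$\int \chi\,\mathbf{1}_{\set{u=v}}(\d\d^c u)^k = \int \chi\,\mathbf{1}_{\set{u=v}}(\d\d^c v_\e)^k \leq \int h_\delta\,(\d\d^c v_\e)^k .$$
Monotone increasing bounded sequences of psh functions converge in capacity. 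By the Bedford--Taylor convergence theorem in its quasi-continuous form, $\int h\,(\d\d^c v_\e)^k \to \int h\,(\d\d^c v)^k$ for every bounded quasi-continuous compactly supported $h$. Letting $\e\to0$ and then $\delta\to0$ by monotone convergence gives
$$\int \chi\,\mathbf{1}_{\set{u=v}}(\d\d^c u)^k \leq \int\chi\,\mathbf{1}_{\set{u=v}}(\d\d^c v)^k .$$
Since $\chi\geq0$ is an arbitrary continuous compactly supported function, this is the claimed inequality of Borel measures.

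The main obstacle is the second step, namely passing to the limit on a set that is neither open nor closed. It is handled by two ingredients: convergence in capacity for monotone sequences, and the fact that bounded quasi-continuous test functions can be integrated against limits of Monge--Amp\`ere measures of uniformly bounded psh functions. I would invoke both from Bedford--Taylor theory as presented in~\cite{guedj-zeriahi:book}, rather than reprove them.
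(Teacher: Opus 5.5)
Your proof is correct. The paper gives no argument of its own and only cites \cite[Cor.~3.28]{guedj-zeriahi:book}. The standard proof behind that citation starts exactly as you do, with $v_\e=\max(u,v-\e)$ and the maximum principle on the plurifine open set $\set{u>v-\e}$. It differs in the limiting step, which is much lighter there.

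Instead of restricting to $\set{u=v}$ first and then passing to the limit, one keeps the unrestricted inequality
$$(\d\d^c v_\e)^k\;\geq\;\mathbf{1}_{\set{u>v-\e}}(\d\d^c v_\e)^k=\mathbf{1}_{\set{u>v-\e}}(\d\d^c u)^k\;\geq\;\mathbf{1}_{\set{u=v}}(\d\d^c u)^k .$$
Its right-hand side does not depend on $\e$. An inequality $\mu_\e\geq\nu$ against a fixed measure passes to weak limits: test against $\chi\in C_c(\Omega)$ with $\chi\geq0$. Moreover $(\d\d^c v_\e)^k\to(\d\d^c v)^k$ weakly, and here the convergence $v_\e\to v$ is even uniform, since $v-\e\leq v_\e\leq v$. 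Hence $(\d\d^c v)^k\geq\mathbf{1}_{\set{u=v}}(\d\d^c u)^k$, and multiplying by $\mathbf{1}_{\set{u=v}}$ gives the lemma.

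Your remark that weak convergence does not control mass on $\set{u=v}$ is true for the restricted measures. It is simply sidestepped by restricting only after the limit. Your route is still valid: the functions $h_\delta$ are bounded, compactly supported and quasi-continuous, and they decrease to $\chi\mathbf{1}_{\set{u=v}}$. It just invokes the convergence theorem for quasi-continuous test functions under convergence in capacity, which is stronger than needed. The standard route uses only weak continuity of the Monge--Amp\`ere operator along monotone (here uniformly convergent) sequences. The same argument with $\max(u,v-\e)$ also yields Demailly's inequality
$$(\d\d^c\max(u,v))^k\geq\mathbf{1}_{\set{u\geq v}}(\d\d^c u)^k+\mathbf{1}_{\set{u<v}}(\d\d^c v)^k$$
for arbitrary locally bounded psh $u,v$, with no ordering assumption.
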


\begin{proof}[Proof of the theorem]
Since $V_K\circ \varphi\inv\leq \delta \log\norm{z} + O(1)$ at infinity and $V_K\circ \varphi\inv \leq 0$ on $\varphi(K)$ we get that $\delta\inv V_K\circ \varphi\inv \leq V_{\varphi(K)}$ in $\C^k$.
Hence, since both $(\d\d^c (V_K\circ \varphi\inv))^k$ and $(\d\d^cV_{\varphi(K)})^k$ 
are concentrated on $\set{V_K\circ \varphi = V_{\varphi(K)}=0}$,
 applying the comparison principle  
yields $\delta^{-k} (\d\d^c (V_K\circ \varphi\inv))^k\leq (\d\d^cV_{\varphi(K)})^k$ in $\C^k$, which proves the first inequality. Applying the same argument to $V_{\varphi(K)}\circ \varphi$ and $V_K$ 
yields the other one. 
\end{proof}

We now come to the main example. 

\begin{thm}\label{thm:example_harmonic_measure}
There exists a regular compact set $K$ and an automorphism $\varphi$ such that 
$\varphi_*\mu_K\neq \mu_{\varphi(K)}$.
\end{thm}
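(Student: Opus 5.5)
We will look for the simplest explicit example: $K$ a Euclidean ball (or polydisk) in $\C^2$ and $\varphi$ a shear. Take $K = \overline{\mathbb B}$ the closed unit ball, so that $V_K(z) = \log^+\norm{z}$ and $\mu_K$ is the normalized Lebesgue measure $\sigma$ on the unit sphere $S^3$. Take $\varphi(z,w) = (z, w + \epsilon z^2)$, or a similar elementary map. Then $\varphi_*\mu_K$ is the pushforward of $\sigma$ to the real hypersurface $\varphi(S^3)$. The task is to show that this measure is not the harmonic measure of $L:=\varphi(K)$. The latter is regular, being the image of a regular set under a biholomorphism, since regularity is a local property at boundary points.

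The strategy is to test both measures against a suitable function and show they disagree. The main tool will be an extremal characterization of $\mu_L$ that we can check against. Concretely, if $\varphi_*\sigma = \mu_L$, then by uniqueness in the Dirichlet-type problem the function $V_L$ would be recovered from $\mu_L$ through the Siciak–Zaharjuta theory. Two cheaper necessary conditions seem more useful, though.

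1. The \emph{Bernstein–Markov/orthogonality} approach. For $\mu_L$, the Robin function $\rho_L$ determines $\mu_L$ only weakly, so this is less direct.

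2. The \emph{local density} approach, which is presumably what the appendix's Lemma~\ref{lem:density_at_0} computes. Near a boundary point $p$ of a smooth strictly pseudoconvex domain, $\mu_L$ has a density with respect to surface measure, and this density is determined by the normal derivative of $V_L$: up to a constant it is $\abs{\fr_n V_L}^2$ times a Levi-form term, coming from $(\d\d^c V_L)^2 = $ (jump of $\d V_L$) $\wedge$ $\d^cV_L\wedge$ Levi form. Meanwhile $\varphi_*\sigma$ has density $\abs{\jac}$-type factors computed from $\varphi$ restricted to $S^3$.

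We will follow the second route. We will use a one-parameter family $\varphi_\epsilon$ and compute both densities to first order in $\epsilon$ at a point such as $p=0\in\varphi(S^3)$ after translation, or at the point $(1,0)$. For $\varphi_*\sigma$ this is an explicit Jacobian computation on the sphere. For $\mu_{L_\epsilon}$ we need $V_{L_\epsilon}$ to first order. Here we will use that $V_{L_\epsilon} = \log^+\norm{\varphi_\epsilon^{-1}} + \epsilon h + O(\epsilon^2)$, where the correction $h$ solves a linearized problem. That problem is a Dirichlet problem for the linearized Monge–Ampère operator outside the ball, with growth constraint coming from the requirement $V_L \in \el^+$. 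The term $\log\norm{\varphi_\epsilon^{-1}}$ grows like $\log\norm{z}^2$ in some directions, so it is not in $\el$. The correction must fix this, and it cannot vanish identically.

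The cleaner way to force inequality is by contradiction. If $\varphi_*\mu_K = \mu_{\varphi(K)}$ for all small $\epsilon$, then $V_K\circ\varphi^{-1}$ and $V_{\varphi(K)}$ would be two exterior solutions of $(\d\d^c u)^2 = 0$ on $\C^2\setminus L$ with the same boundary values $0$ and the same Monge–Ampère measure. Monge–Ampère measures on the boundary encode the normal derivatives. Together with Levi-form data, this would give equal normal derivatives along $\fr L$, and by unique continuation for the (elliptic along leaves) foliation structure, the two would agree near $\fr L$. For the ball this foliation is by the complex lines through the origin. Following leaves to infinity then contradicts the differing growth rates ($\log\norm z$ versus $2\log\norm z$ along generic directions). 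This leaf argument uses the Lempert foliation of the complement of the convex set $L$ for small $\epsilon$, on whose leaves $V_L$ is harmonic.

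The main obstacle is rigorously turning equality of boundary measures into equality of normal derivatives plus unique continuation. If that is too delicate, we will fall back on the brute-force density comparison at a single boundary point to first order in $\epsilon$. That comparison needs an explicit formula for $V_{L_\epsilon}$, obtained via Lempert extremal discs for perturbed balls, or by choosing $\varphi$ so that $L$ is a known convex body (for instance a real-ellipsoid-like image whose extremal function is explicit by Baran's formulas).
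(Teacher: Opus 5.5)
Your proposal never actually proves that $(\varphi_\e)_*\sigma\neq\mu_{\varphi_\e(\overline{\mathbb B})}$. The step you call ``the main obstacle'' is the whole content of the theorem, and as sketched it does not go through.

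First, equal Monge--Amp\`ere measures give equal normal derivatives only if $V_L$ is $C^2$ up to $\fr L$ from outside. Only then is $\mu_L=d^cV_L\wedge dd^cV_L$ restricted to $\fr L$, with density $(\fr_\nu V_L)^2$ times a Levi factor depending only on $\fr L$. For strongly convex $L$ this regularity has to be imported from Lempert's theory of extremal discs.

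Second, and more seriously, there is no general unique continuation for $(dd^cu)^2=0$. Indeed $u=h(z_1)$ is a solution for every smooth subharmonic $h$, so two smooth solutions can agree on a half-space and differ elsewhere. What rescues your argument is that $\fr L$ is non-characteristic, and you must use this explicitly. One way to do it:
\begin{itemize}
\item At $p\in\fr L$ the Cauchy data determine every second derivative except $\fr_\nu^2u$.
\item On the complex tangent $H$ one has $u_{H\bar H}=\fr_\nu u\cdot(\text{Levi form})>0$, so the equation then fixes the full complex Hessian. Hence it fixes the leaf direction $\ell(p)$, which is transverse to $H$.
\item Write a leaf as $f\colon\set{\abs{\zeta}>1}\to\C^2$ with $u\circ f=\log\abs{\zeta}$. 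Its boundary circle $\theta\mapsto f(e^{i\theta})$ is an integral curve of the vector field $X(p)\in\ell(p)\cap T_p\fr L$ normalized by $du(-JX)=1$.
\item So equal Cauchy data give equal boundary circles, hence equal leaves, since a holomorphic map on $\set{\abs{\zeta}>1}$ continuous up to the circle is determined by its boundary values.
\item This forces $V_L=\log\norm{\varphi_\e\inv}$ off $L$, contradicting $V_L\in\el^+$.
\end{itemize}
None of this is in your proposal; it is only announced.

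Your fallback is not viable as stated either. The expansion $V_{L_\e}=\log^+\norm{\varphi_\e\inv}+\e h+O(\e^2)$ is centered at the wrong function. For every $\e>0$ the difference $V_{L_\e}-\log^+\norm{\varphi_\e\inv}$ is unbounded at infinity, so the condition $V_{L_\e}\in\el^+$ cannot be imposed on a first-order correction. Since $L_\e$ lies between the balls of radii $1\pm C\e$, the natural expansion is around $\log^+\norm{z}$. Even then you would still have to solve the linearized exterior problem. Nothing guarantees a first-order discrepancy: the map $(z,w)\mapsto(iz,w)$ preserves the ball and conjugates $\varphi_\e$ to $\varphi_{-\e}$, so at the boundary points $(0,w)$ the discrepancy is even in $\e$. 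Baran-type formulas do not apply, since $\varphi_\e(\overline{\mathbb B})$ is not an ellipsoid.

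The idea you are missing, and the one the paper uses, is to choose $K$ and $\varphi$ so that $\varphi(K)$ stays in a class whose harmonic measure is \emph{explicit}:
\begin{itemize}
\item $K$ is the unit disk of $\R^2\subset\C^2$, with $\mu_K=\d x\,\d y/(2\pi\sqrt{1-x^2-y^2})$.
\item $\varphi_\e(z,w)=(z,w+\e z^3)$ is real, odd (so central symmetry is preserved), of Jacobian $1$, and fixes $0$.
\item Hence $\varphi_\e(K)$ is again a centrally symmetric convex body in $\R^2$, and $(\varphi_\e)_*\mu_K$ has density $1/2\pi$ at $0$.
\end{itemize}
The Bedford--Taylor formula $\rho_\e(0)=(2\pi)^{-2}\int_0^{2\pi}h_\e(\theta)^{-2}\,d\theta$, with $h_\e$ the support function, then gives $\rho_\e(0)=\frac{1}{2\pi}\bigl(1-\frac{3}{32}\e^2+O(\e^3)\bigr)$ by a one-variable expansion. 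The discrepancy appears only at order $\e^2$, and no boundary regularity or unique continuation is needed.
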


\begin{proof}  The point  of this theorem  is that the harmonic measure of a 
symmetric convex body in $\R^k\subset \C^k$ can be computed explicitly (see   Bedford-Taylor~\cite[Thm 1.2]{bedford-taylor:convex}). The construction goes as follows: start with $K = B$ the closed Euclidean unit ball in $\R^2\subset \C^2$. Its harmonic measure is 
$$\mu_B  = \frac{ \d x\,\d y }{2\pi\sqrt{1- x^2 - y^2}}  ,$$ where we denote by $(z,w)$ the coordinates in $\C^2$  $x =\mathrm{Re}(z)$, $y  = \mathrm{Im}(z)$.  
Consider the automorphism $\varphi_\e\colon (z,w)\mapsto (z, w+\e z^3)$ in $\C^2$, where $\e$ is a small positive real number. Then, for small $\e$, $\varphi_\e(B)$ is a symmetric closed convex set in $\R^2$ (this actually holds for $\e\leq 1/6$). Since 
$\varphi_\e$ has Jacobian 1, we have 
$$(\varphi_\e)_*\mu_B = \frac{ \d x\,\d y}{2\pi\sqrt{1- x^2 - (y-\e x^3)^2}}   .$$
Note that value of the  density of $(\varphi_\e)_*\mu_B$ 
at 0 equals $1/2\pi$. 

By Theorem~\ref{thm:GZ}, $\mu_{\varphi_\e(B)}$ is also an absolutely continuous measure supported on $\varphi_\e(B)$, of the form 
$\rho_\e(x,y) \,\d x\, \d y$.

\begin{lem}\label{lem:density_at_0}
With notation as above, $$ \rho_\e (0) = \frac{1}{2\pi} \lrpar{1 - \frac{3}{32}\e^2 + O(\e^3)}.$$
\end{lem}

Thus,   $\rho_\e (0) \neq  1/{2\pi}$ for small $\e$, so
$(\varphi_\e)_*\mu_B\neq \mu_{\varphi_\e(B)}$ and
Theorem~\ref{thm:example_harmonic_measure} follows. \end{proof}

%\begin{cor}
%For small $\e>0$, $(\varphi_\e)_*\mu_B\neq \mu_{\varphi_\e(B)}$. 
%\end{cor}

 \begin{proof}[Proof of the lemma]
 Let $K_\e = \varphi_\e(B)$. The Bedford-Taylor formula~\cite[Thm 1.2]{bedford-taylor:convex} reads 
\begin{equation}\label{eq:BT}
\rho_\e (0) = \frac1{(2\pi)^2}
\int_0^{2\pi}
\frac{d\theta}{h_\varepsilon(\theta)^2} 
%= \frac{1}{2\pi^2}\operatorname{Area}(K_\varepsilon^\circ),
\end{equation}
where $h_\e$ is the support function: $h_\varepsilon(\theta) = \sup_{p\in K} \bra{p, u_\theta}$,   $u_\theta =(\cos \theta, \sin\theta)$. 
 Geometrically, 
 $\rho_\e (0)  =\operatorname{Area}(K_\varepsilon^\circ)/2\pi^2$, where 
  $K_\e^\circ$ is the  polar (or dual) convex body, that is,
\begin{equation*} K_\e^\circ = \set{q\in \R^2\, \vert \, \forall p\in K_\e, \ \bra{p,q}\leq 1}. \end{equation*} 
 The boundary of $K_\varepsilon$ admits the parametrization
\begin{equation*}
 \gamma_\varepsilon(t)
=
\lrpar{\cos t,\,
\sin t+\varepsilon\cos^3t },
\end{equation*}
hence 
\begin{equation}
h_\varepsilon(\theta)
=
\max_{t\in [0, 2\pi]}
\left\langle
\gamma_\varepsilon(t),u_\theta
\right\rangle
=
\max_{t\in [0, 2\pi]}
\left(
\cos(t-\theta)+ \varepsilon\sin\theta\,\cos^3t
\right). \label{eq:max}
\end{equation}
For $\varepsilon=0$, the maximum is attained at $t=\theta$. For $\varepsilon$ small, write
$t=\theta+\delta$ and let $\delta^*$ be the value at which the maximum of $h_\e(\theta+\cdot)$
 is attained, which is close to 0. 
The critical point equation corresponding to~\eqref{eq:max} is
\begin{equation}
-\sin\delta
-
3\varepsilon\sin\theta\,
\cos^2(\theta+\delta)
\sin(\theta+\delta)
=0.
\end{equation}
This first  gives $\delta^* = O(\e)$ and then 
\begin{equation}
\delta^*
=
-3\varepsilon\cos^2\theta\sin^2\theta
+
O(\varepsilon^2).\label{eq:DL}
\end{equation}
Substituting \eqref{eq:DL} into~\eqref{eq:max}, and expanding to the second order then  yields
\begin{equation}
h_\varepsilon(\theta)
=
1
+
\varepsilon\cos^3\theta\sin\theta
+
\frac92\varepsilon^2
\cos^4\theta\sin^4\theta
+
O(\varepsilon^3) 
\end{equation}
so that 
\begin{equation}\label{eq:DL2}
\frac1{h_\varepsilon(\theta)^2}
=
1
-2\varepsilon\cos^3\theta\sin\theta
+
\varepsilon^2
\left(
3\cos^6\theta\sin^2\theta
-
9\cos^4\theta\sin^4\theta
\right)
+
O(\varepsilon^3).
\end{equation}
Using the identities
\begin{equation*}
\int_0^{2\pi}\cos^3\theta\sin\theta\,d\theta=0, \ 
\int_0^{2\pi}
\cos^6\theta\sin^2\theta\,d\theta
=
\frac{5\pi}{64},
\text{ and }
\int_0^{2\pi}
\cos^4\theta\sin^4\theta\,d\theta
=
\frac{3\pi}{64}
\end{equation*}
and inserting these values into~\eqref{eq:DL2} and \eqref{eq:BT}, we obtain
\begin{equation*} \rho_\e(0)=
\frac{1}{4\pi^2}\lrpar{2\pi + \lrpar{3\times \frac{5\pi}{64} - 9 \times \frac{3\pi}{64}}\e^2+ O(\varepsilon^3)}
=
\frac{1}{2\pi}
\left(
1-\frac{3}{32}\varepsilon^2+O(\varepsilon^3)
\right),
\end{equation*} as desired.\end{proof}

\subsection*{Some additional thoughts}
In hindsight, given that a non-linear automorphism of $\C^2$ distorts so much a neighborhood of infinity, it may come as a surprise that invariant harmonic measures even exist! 
This suggests a number of fascinating    questions at the intersection of dynamics and pluripotential theory. 

\begin{que}
If $K$ is a regular compact set in $\C^2$ (or $\C^k$) and $\varphi$ is a loxodromic 
 automorphism   such that  $\varphi_*\mu_K = \mu_K$, must $K$ be ``special''?
 (e.g. a compact set whose Shilov boundary is $J^*$?)
\end{que}

In one-dimensional dynamics, there is an abundant literature on the dynamical approach to the 
geometric properties of harmonic measure. The reason is that when a compact set in $\C$ is invariant under a (typically expanding) conformal dynamical system, there is generally  an invariant measure in the measure class of the harmonic measure (see e.g. \cite{PUZ, zdunik}). 

\begin{que}
 If $K$ is a regular compact set in $\C^k$ invariant under an automorphism $\varphi$, under which conditions does there exist an invariant measure in the class of the pluricomplex harmonic measure? 
\end{que}

Note that in this setting, the measure class of $\mu_K$ is $\varphi_*$-invariant by Theorem~\ref{thm:GZ}. 

 \bibliographystyle{plain}
\bibliography{bib-same}
\end{document}